\DeclareFontFamily{OMX}{mlmex}{}
\DeclareFontShape{OMX}{mlmex}{m}{n}{<->mlmex10}{}
\colorlet{citecolor}{green!75!black}
\colorlet{linkcolor}{red!75!black}
\colorlet{urlcolor}{blue!75!black}
\DeclareMathOperator{\Hom}{Hom}
\DeclareMathOperator{\Aut}{Aut}
\DeclareMathOperator{\Inn}{Inn}
\DeclareMathOperator{\id}{id}
\newcommand{\normalsub}{\trianglelefteq}
\newcommand{\fisub}{\leq_f}
\newcommand{\finormalsub}{\normalsub_f}
\DeclareMathOperator{\BS}{BS} 
\DeclareMathOperator{\GL}{GL} 
\DeclareMathOperator{\SL}{SL} 
\DeclareMathOperator{\Coin}{Coin}
\newcommand{\NN}{\mathbb{N}}
\newcommand{\ZZ}{\mathbb{Z}}
\newcommand{\QQ}{\mathbb{Q}}
\newcommand{\nest}[1]{\mathcal{#1}}
\newcommand{\prenest}[1]{\nest{#1}_\times}
\newcommand{\bvarphi}{{\bar{\varphi}}}
\newcommand{\bpsi}{{\bar{\psi}}}
\newcommand{\tvarphi}{{\tilde{\varphi}}}
\newcommand{\tpsi}{{\tilde{\psi}}}
\newcommand{\topPF}{\tau_{\scriptscriptstyle \mathrm{PF}}}
\newcommand{\topFI}{\tau_{\scriptscriptstyle \mathrm{FI}}}
\title{Twisted conjugacy and separability}
\author[Sam Tertooy]{Sam Tertooy\,\orcidlink{0000-0002-5750-9153}}
\date{\today}
\address{KU Leuven, Kulak Kortrijk Campus\\
	E.~Sabbelaan 53\\
	8500 Kortrijk\\
	Belgium}
\email{\href{mailto:sam.tertooy@kuleuven.be}{sam.tertooy@kuleuven.be}}
\urladdr{\url{https://stertooy.github.io}}
\subjclass[2020]{Primary 20E26; Secondary 20E18, 20E45, 20F19}
\keywords{Twisted conjugacy, separability, residual finiteness, nests, nilpotent groups, polycyclic groups, profinite groups}
\begin{document}
	
\begin{abstract}
	A group \(G\) is twisted conjugacy separable if for every automorphism \(\varphi\), distinct \(\varphi\)-twisted conjugacy classes can be separated in a finite quotient. Likewise, \(G\) is completely twisted conjugacy separable if
	for any group \(H\) and any two homomorphisms \(\varphi,\psi\) from \(H\) to \(G\), distinct \((\varphi,\psi)\)-twisted conjugacy classes can be separated in a finite quotient. We study how these properties behave with respect to taking subgroups, quotients and finite extensions, and compare them to other notions of separability in groups. Finally, we show that for polycyclic-by-nilpotent-by-finite groups, being completely twisted conjugacy separable is equivalent to all quotients being residually finite.
\end{abstract}
	
\maketitle

	\begin{center}
	This is an Accepted Manuscript of an article published by De Gruyter in Journal of Group Theory on 5 Dec 2024, available online: \href{https://doi.org/nv2p}{https://doi.org/nv2p}.
\end{center}

\section{Introduction}
Let \(G\) and \(H\) be groups and let \(\varphi,\psi\colon H \to G\) be two homomorphisms. Two elements \(g_1,g_2\) of \(G\) are said to be \emph{\((\varphi,\psi)\)-twisted conjugate} if there exists some \(h \in H\) such that \(g_1 = \psi(h) g_2 \varphi(h)^{-1}\). This creates an equivalence relation, the classes of which are called \emph{\((\varphi,\psi)\)-twisted conjugacy classes}; the class of \(g \in G\) is denoted by \([g]_{\varphi,\psi}\). If \(G = H\) and \(\psi = \id_G\), we omit \(\psi\) and just write `\(\varphi\)-twisted conjugacy' and \([g]_\varphi\). If moreover \(\varphi = \id_G\) as well, we have the usual conjugacy relation and we write \([g]\) for the conjugacy class of \(g \in G\).

In \cite{steb76-a}, Stebe introduced the notion of nests in a group \(G\). A subset of \(G \times G\) is called a \emph{pre-nest} if it contains \((ac^{-1},d^{-1}b)\) whenever it contains both \((a,b)\) and \((c,d)\). The set of all elements \(ab \in G\) corresponding to the elements \((a,b)\) of a pre-nest is called a \emph{nest} in \(G\). Examples of nests include subgroups, products of two subgroups and twisted conjugacy classes of the identity.

A subset \(S\) of a group \(G\) is called \emph{separable} if for every \(g \notin S\), there is a finite index normal subgroup \(N\) of \(G\) such that \(g \notin SN\). This gives rise to many `separability properties', e.g.\@ a group \(G\) is called \emph{residually finite} if all singletons \(\{g\}\) are separable, \emph{strongly residually finite} if all normal subgroups are separable, \emph{extended residually finite} if all subgroups are separable, \emph{conjugacy separable} if all conjugacy classes are separable and \emph{residually finite with respect to nests} if all nests are separable.

In \cite{dp18-a}, Deré and Pengitore defined a group \(G\) to be \emph{twisted conjugacy separable} if for all \(\varphi \in \Aut(G)\) and all \(g \in G\), the twisted conjugacy class \([g]_\varphi\) is separable. It should be noted that the term `twisted conjugacy separable' first occurred in the works of Fel'shtyn and Troitsky \cite{ft06-a,ft07-a}. However, their definition differs from the one given by Deré and Pengitore, which is the one we shall use.

In this paper, we introduce and study \emph{complete twisted conjugacy separability}. A group \(G\) is said to have this property if for all groups \(H\), all \(\varphi,\psi \in \Hom(H,G)\) and all \(g \in G\), the twisted conjugacy class \([g]_{\varphi,\psi}\) is separable. Unlike (twisted) conjugacy separability, this property is retained when taking subgroups, quotients and finite extensions. Our main results are the following.

\begin{theorem}[manual-num=A]
	\label{thm:mainthmAintro}
	A group is residually finite with respect to nests if and only if it is completely twisted conjugacy separable.
\end{theorem}

\begin{theorem}[manual-num=B]
	\label{thm:mainthmBintro}
	For a polycyclic-by-nilpotent-by-finite group \(G\), the following are equivalent:
	\begin{enumerate}[label=(\alph*)]
		\item \(G\) is strongly residually finite;
		\item \(G\) is extended residually finite;
		\item \(G\) is residually finite with respect to nests;
		\item \(G\) is completely twisted conjugacy separable.
	\end{enumerate}
\end{theorem}

\zcref{thm:mainthmAintro} shows that residual finiteness with respect to nests, or even nests in general, can be studied using twisted conjugacy as a framework. \zcref{thm:mainthmBintro} extends a result by Menth \cite{ment02-a}, who proved that strong and extended residual finiteness are equivalent for nilpotent-by-finite groups.

We finish this section by agreeing on some notation. For a group \(G\), we will always use \(\bar{G}\) and \(\tilde{G}\) to denote quotients of \(G\). For an element \(g\), we use \(\bar{g}\) and \(\tilde{g}\) to denote the projections of \(g\) to \(\bar{G}\) and \(\tilde{G}\) respectively. For a homomorphism \(\varphi \in \Hom(H,G)\), whenever we write \(\bvarphi \in \Hom(H,\bar{G})\), we mean the homomorphism induced by \(\varphi\), i.e.\@ \(\bvarphi \coloneq p \circ \varphi\) with \(p\colon G \to \bar{G}\) the quotient map. We will also encounter \(\hat{G}\), however, the `hat' will always indicate the profinite completion of \(G\) and never a quotient of \(G\). We will use \(H \fisub G\) and \(N \finormalsub G\) to indicate a finite index subgroup \(H\) and a finite index normal subgroup \(N\) respectively. The inner automorphism of \(G\) that conjugates every element by \(g \in G\), will be denoted by \(\iota_g\).

\section{Separability properties}
\label{sec:groupprops}
A subset \(S\) of a group \(G\) is called \emph{separable} if any of the following equivalent conditions hold:
\begin{itemize}
	\item For each \(g \in G \setminus S\), there exists some \(N \finormalsub G\) such that \(g \notin SN\);
	\item For each \(g \in G \setminus S\), there exists a homomorphism \(p\) from \(G\) onto a finite group such that \(p(g) \notin p(S)\);
	\item \(S\) is closed in the finite-index topology of \(G\).
\end{itemize}
The finite-index topology mentioned here is the topology on a group obtained by taking as a basis the set of all cosets (both left and right) of all finite index subgroups. Any group becomes a topological group when equipped with this topology. This topology is often called the ``profinite topology'', however, as we will be dealing with profinite groups and their topology later in this paper, we avoid this name to prevent any potential confusion between the two.

We now give an overview of various separability properties. We give their definition, mention whether they are retained or not under taking subgroups, quotients and finite extensions, and discuss how they are related to one another. Only results available in the literature are considered in this \zcref[nocap,noref]{sec:groupprops}.

\begin{definition}
	\label{def:resfin}
	A group \(G\) is called \emph{residually finite} (RF for short) if every singleton \(\{g\} \subseteq G\) is separable.
\end{definition}

There are many equivalent ways to define residual finiteness. It suffices to demand that \(\{1_G\}\) is separable, or, one can demand that every finite subset of \(G\) is separable. From a topological point of view, \(G\) is residually finite if and only if its finite-index topology is Hausdorff.

The family of RF-groups is subgroup-closed and closed under taking finite extensions, but not quotient-closed. For example, the group of dyadic rationals \(\ZZ[\frac{1}{2}]\) is residually finite, yet its quotient \(\ZZ[\frac{1}{2}]/\ZZ\) is the quasicyclic group \(\ZZ({2^\infty})\), which is not residually finite.

\begin{definition}
	A group \(G\) is called \emph{conjugacy separable} (CS) if every conjugacy class \([g]\) is separable.
\end{definition}
In a conjugacy separable group \(G\), if two elements \(g,h \in G\) are non-conjugate, then their images \(\bar{g},\bar{h}\) are non-conjugate in some finite quotient \(\bar{G}\). Many examples of conjugacy separable groups are known, including all polycyclic-by-finite groups \cite{reme69-a,form76-a}. Unfortunately, this property behaves quite badly: it is neither subgroup-closed \cite[Thm.~1.1]{mm12-a}, nor quotient-closed (again, \(\ZZ[\frac{1}{2}]\) is a counterexample), nor closed under taking finite extensions \cite{gory86-a}.

\begin{definition}
	\label{def:tcs}
	A group \(G\) is called \emph{twisted conjugacy separable} (TCS) if for every automorphism \(\varphi \in \Aut(G)\) and every element \(g \in G\), the twisted conjugacy class \([g]_\varphi\) is separable.
\end{definition}

This definition was given by Deré and Pengitore in \cite{dp18-a}. Little is known about how this property behaves with respect to subgroups, quotients and finite extensions, hence we will study this property extensively in \zcref{sec:tcs}.

The original definition of twisted conjugacy separability, which differs from \zcref{def:tcs}, was given by Fel'shtyn and Troitsky in \cite{ft07-a} and further developed in \cite{ft06-a}. Their definition says that a group \(G\) is twisted conjugacy separable if for every \(\varphi \in \Aut(G)\) with finitely many twisted conjugacy classes and every \(g,h \in G\) with \([g]_\varphi \neq [h]_\varphi\), there exists a \(\varphi\)-invariant \(N \finormalsub G\) such that \([\bar{g}]_\bvarphi \neq [\bar{h}]_\bvarphi\), where \(\bvarphi\) is the induced automorphism on \(\bar{G} \coloneq G/N\). In the sequel, we will exclusively use \zcref{def:tcs}.

\begin{definition}
	A group \(G\) is called \emph{strongly residually finite} (SRF) if every normal subgroup of \(G\) is separable.
\end{definition}

Equivalently, \(G\) is an SRF-group if and only if every quotient of \(G\) is residually finite. These groups also go by the names ``QRF-groups'' and ``(RF)\textsuperscript{Q}-groups''. Strong residual finiteness was studied in FC-groups \cite{ko03-a} and nilpotent groups \cite{ment02-a}; any finitely generated abelian-by-polycyclic-by-finite group (and in particular any finitely generated metabelian group) is SRF \cite[Thm.~3]{jate74-a}. The family of SRF-groups is closed under taking finite extensions, but in contrast with RF-groups, it is quotient-closed yet not subgroup-closed. For example, the Baumslag-Solitar group \(\BS(1,2)\), being finitely generated metabelian, is SRF, but its derived subgroup \(\ZZ[\frac{1}{2}]\) is not.

\begin{definition}
	\label{def:extresfin}
	A group \(G\) is called \emph{extended residually finite} (ERF) if every subgroup of \(G\) is separable.
\end{definition}

We refer to \cite{rrv09-a} for a summary on ERF-groups. The family of ERF-groups is closed under taking finite extensions and is both subgroup- and quotient-closed. A well-known result of Mal'cev says that polycyclic-by-finite groups are ERF \cite{malc58-a}; they remain the only known finitely generated ERF-groups. Characterisations for nilpotent \cite{ment02-a}, FC \cite{rrv09-a}, FC\textsuperscript{*} \cite{rrv11-a} and linear \cite{wehr11-a} ERF-groups are known.

In \cite{steb76-a}, Stebe introduced \emph{nests} and \emph{residual finiteness with respect to nests}, specifically with the aim of uniting conjugacy separability and extended residual finiteness in a single separability property.

\begin{definition}
	Let \(G\) be a group. A \emph{pre-nest} \(\prenest{N}\) is a non-empty subset of \(G \times G\) such that if both \((a,b) \in \prenest{N}\) and \((c,d) \in \prenest{N}\), then also \((ac^{-1},d^{-1}b) \in \prenest{N}\). A \emph{nest} \(\nest{N}\) in \(G\) is the image of a pre-nest \(\prenest{N}\) in \(G \times G\) under the multiplication map \(G \times G \to G\colon (g_1,g_2) \mapsto g_1g_2\).
\end{definition}

It can easily be seen that any pre-nest must contain \((1,1)\) and hence any nest contains the identity. We also remark that Stebe did not use the term ``pre-nest'' and instead called these sets ``nests'' as well. Below are some examples of nests, all of which appeared in  \cite{steb76-a,steb87-a}.

\begin{example}
	\label{ex:nests}
	The following subsets of a group \(G\) are nests:
	\begin{enumerate}[label=(\arabic*)]
		\item any subgroup \(H \leq G\);
		\item any double coset \(HK\) for subgroups \(H,K \leq G\);
		\item the set of commutators \(\{ [h,g] \mid h \in G\} \) for any fixed \(g \in G\);
		\item the set \(\{h\varphi(h)^{-1} \mid h \in G\}\) for any fixed \(\varphi \in \Aut(G)\).
	\end{enumerate}
\end{example}

\begin{definition}
	A group \(G\) is called \emph{residually finite with respect to nests} (NRF) if every nest in \(G\) is separable.
\end{definition}

The family of NRF-groups is closed under taking subgroups \cite[Thm.~4]{steb76-a} and under taking finite extensions \cite[Thm.~3]{steb87-a}; the behaviour with respect to quotients will be discussed in \zcref{sec:NestsAndCTCS}.

\zcref{ex:nests} (1) shows that being NRF implies being extended residually finite. From (2), we can deduce that in NRF-groups any double coset \(HgK\) is separable, since \(x \in HgK\) if and only if \(xg^{-1} \in HgKg^{-1}\).
Similarly, (3) implies that an NRF-group is conjugacy separable, as \(x = hgh^{-1}\) if and only if \(xg^{-1} = [h,g]\). Finally, (4) implies that an NRF-group is twisted conjugacy separable: \(x = hg\varphi(h)^{-1}\) if and only if \(xg^{-1} = h(\iota_g\varphi)(h)^{-1}\).

We summarise how these notions of separability behave with respect to taking 
subgroups, quotients and finite extensions in \zcref{tbl:rfproperties}. Their relation to one another is illustrated by the diagram below.
\begin{equation}
	\label{eq:diagram}
    \begin{tikzcd}[row sep=small,column sep=small,ampersand replacement=\&]
	   \& \text{TCS}\arrow[r,phantom,"\subsetneq"] \& \text{CS}\arrow[dr,phantom,"\subsetneq", sloped, start anchor=south east] \& \\
	   \text{NRF}\arrow[dr,phantom,"\subseteq" description, sloped]\arrow[ur,phantom,"\subseteq" description, sloped]\& \& \&\text{RF} \\
	   \& \text{ERF}\arrow[r,phantom,"\subsetneq"]\&\text{SRF} \arrow[ur,phantom,"\subsetneq" description, sloped]\& 
    \end{tikzcd}
\end{equation}
\begin{table}[hb]
	\caption{Separability properties of groups}
	\label{tbl:rfproperties}
	\begin{tabular}{cccc}
		\toprule
		separability&  subgroup- & quotient- & closed under \\
		property & closed&closed&finite extensions\\
		\midrule
		RF &Yes	& No & Yes  \\
		CS &No & No & No\\
		TCS  &? & ? & ?\\
		SRF &No &	Yes & Yes\\
		ERF  &Yes & Yes & Yes\\
		NRF &Yes & ? & Yes\\
		\bottomrule
	\end{tabular}
\end{table}

\begin{proposition}
	In Diagram \eqref{eq:diagram}, the two inclusions in the middle and the two inclusions to the right are indeed strict.
\end{proposition}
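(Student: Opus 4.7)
The plan is to exhibit, for each of the four strict inclusions, a separating group; two of the four examples are essentially already present in the preceding discussion.

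For $\text{SRF} \subsetneq \text{RF}$, I would reuse the group $\ZZ[\tfrac{1}{2}]$: it is residually finite, yet its quotient $\ZZ(2^\infty) = \ZZ[\tfrac{1}{2}]/\ZZ$ fails to be residually finite, so $\ZZ[\tfrac{1}{2}]$ is not SRF. For $\text{ERF} \subsetneq \text{SRF}$, I would take the Baumslag--Solitar group $\BS(1,2)$, already noted in the text to be SRF. It contains $\ZZ[\tfrac{1}{2}]$ as a subgroup (the kernel of its abelianisation), and the cyclic subgroup $\ZZ \leq \ZZ[\tfrac{1}{2}]$ is not separable: every finite-index subgroup of $\ZZ[\tfrac{1}{2}]$ has the form $n\ZZ[\tfrac{1}{2}]$ for some odd $n$, and since $2$ is invertible modulo $n$ a short calculation gives $\ZZ + n\ZZ[\tfrac{1}{2}] = \ZZ[\tfrac{1}{2}]$. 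Because ERF is subgroup-closed, $\BS(1,2)$ cannot be ERF.

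For $\text{CS} \subsetneq \text{RF}$, I would invoke the classical finitely presented residually finite groups with unsolvable conjugacy problem (for instance those of Miller): such groups have solvable word problem, so conjugacy separability would force a solvable conjugacy problem, contradicting the construction. The step I expect to be hardest is $\text{TCS} \subsetneq \text{CS}$. Purely abelian examples are of little help, because for abelian $A$ the class $[g]_\varphi$ is the coset $g + \operatorname{Im}(\id - \varphi)$, which in the readily computable abelian RF groups tends to be of finite index and hence to be automatically separable. I would instead look for a non-abelian CS example carrying an automorphism with a non-separable twisted conjugacy class; the natural starting point is the work of Deré and Pengitore \cite{dp18-a}, who initiated the study of TCS and exhibit finitely generated nilpotent groups (which are automatically CS by the Remeslennikov--Formanek theorem) on which the property fails.
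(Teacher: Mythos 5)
Three of your four separations are fine. For \(\operatorname{SRF} \subsetneq \operatorname{RF}\) you use the paper's example \(\ZZ[\frac{1}{2}]\) verbatim, and for \(\operatorname{ERF} \subsetneq \operatorname{SRF}\) you use the same witness \(\BS(1,2)\), only certifying the failure of ERF directly (the cyclic subgroup \(\ZZ\) is dense in \(\ZZ[\frac{1}{2}]\) for the finite-index topology, since the finite index subgroups are the \(n\ZZ[\frac{1}{2}]\) with \(n\) odd) instead of the paper's route via the derived subgroup not being SRF; both arguments are correct. For \(\operatorname{CS} \subsetneq \operatorname{RF}\) you take a genuinely different path: the paper quotes Stebe's result that \(\GL_n(\ZZ)\) and \(\SL_n(\ZZ)\), \(n \geq 3\), are RF but not CS, whereas you invoke Miller's finitely presented residually finite groups with unsolvable conjugacy problem together with the Mostowski-type observation that a finitely presented conjugacy separable group has solvable conjugacy problem. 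That is a valid classical alternative (it buys independence from Stebe's computation at the cost of the decision-problem machinery and the appropriate references), so this part would stand once the citation is supplied.

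The genuine gap is \(\operatorname{TCS} \subsetneq \operatorname{CS}\): you give no example, and the direction you propose to search in cannot produce one. Finitely generated nilpotent groups are polycyclic, hence residually finite with respect to nests by Stebe \cite[Thm.~5]{steb87-a}, hence twisted conjugacy separable (this is exactly the implication recorded just before the proposition via \cref{ex:nests}(4)); in fact Deré and Pengitore prove in \cite{dp18-a} that finitely generated nilpotent groups \emph{are} (effectively) twisted conjugacy separable -- their paper does not exhibit finitely generated nilpotent groups on which TCS fails. So no finitely generated nilpotent group, nor indeed any polycyclic-by-finite group, can separate TCS from CS, and any example must be infinitely generated or far from nilpotent. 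The paper's witness is of the former kind: for a finite group \(G\) with trivial centre, the direct sum of countably infinitely many copies of \(G\) is conjugacy separable but not twisted conjugacy separable \cite[Sec.~3]{troi20-a}. Without some such example, your proof of the proposition is incomplete.
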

\begin{proof}
We give examples of groups that illustrate that these inclusions are strict.
\begin{itemize}
	\item CS \(\subsetneq\) RF: the general and special linear groups \(\GL_n(\ZZ)\), \(\SL_n(\ZZ)\) with \(n \geq 3\) are RF but not CS \cite[Thm.~3]{steb72-a}.
	\item TCS \(\subsetneq\) CS: if \(G\) is a centreless finite group, the direct sum of countably infinitely many copies of \(G\) is CS but not TCS \cite[Sec.~3]{troi20-a}.
	\item SRF \(\subsetneq\) RF: the group of dyadic rationals \(\ZZ[\frac{1}{2}]\) is RF, but its quasicyclic quotient \(\ZZ[\frac{1}{2}]/\ZZ\) is not. Thus \(\ZZ[\frac{1}{2}]\) is not SRF.
	\item ERF \(\subsetneq\) SRF: we stated earlier that \(\BS(1,2)\) is SRF, but its derived subgroup \(\ZZ[\frac{1}{2}]\) is not. If \(\BS(1,2)\) were ERF, then its derived subgroup would also be ERF and therefore SRF.\qedhere
\end{itemize}
\end{proof}

\section{Profinite groups}

The class of profinite groups will be a rich source of (counter)examples when studying twisted conjugacy separability. Below, we give a short overview of the definitions and properties we will need later, based on \cite[Sec.~2.1 \& Sec.~4.2]{rz10-a}.

\begin{definition}
	A \emph{profinite group} is a topological group that is compact, Hausdorff and totally disconnected.
\end{definition}

\begin{proposition}
	\label{prop:profiniteproperties}
	For any profinite group, one has:
	\begin{itemize}
		\item The identity has a local basis consisting of open normal subgroups;
		\item Subgroups are open if and only if they are closed and have finite index.
	\end{itemize}
\end{proposition}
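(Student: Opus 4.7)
The plan is to exploit three features of a profinite group $G$: total disconnectedness delivers a basis of clopen sets, compactness forces closed subgroups of finite index to be open (and vice versa), and continuity of multiplication combined with compactness allows us to upgrade a clopen neighbourhood of the identity into an open subgroup.

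For the first bullet I would proceed in three steps. First, I would invoke the standard fact that in a compact Hausdorff totally disconnected space the clopen subsets form a basis of the topology (connected components coincide with quasi-components and each quasi-component is the intersection of its clopen neighbourhoods). Hence any open neighbourhood $U$ of $1$ contains a clopen neighbourhood $V$ of $1$. Second, since $V$ is compact and $1 \cdot V \subseteq V$, a tube-lemma style argument, using continuity of multiplication at each point $(1,v)$ with $v \in V$ followed by a finite subcover, produces an open \emph{symmetric} neighbourhood $W$ of $1$ with $WV \subseteq V$. By induction $W^n \subseteq W^n V \subseteq V$ for every $n$, so $H := \bigcup_{n \geq 0} W^n$ is a subgroup contained in $V \subseteq U$, and it is open because it contains the open neighbourhood $W$ of $1$. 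Third, since $G$ is compact, the cosets of $H$ form a disjoint open cover of $G$, forcing $[G:H] < \infty$; in particular $H$ has only finitely many conjugates, and $N := \bigcap_{g \in G} gHg^{-1}$ is an open normal subgroup of $G$ contained in $U$.

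For the second bullet I would argue both directions directly, without any further heavy machinery. If $H \leq G$ is open, then each coset $gH$ is open (left translation is a homeomorphism), so $G \setminus H$ is a union of open cosets and $H$ is closed; and the cosets form a disjoint open cover of the compact space $G$, forcing $H$ to have finite index. Conversely, if $H$ is closed with finite index, then $G \setminus H$ is a finite union of cosets $gH$, each closed as a translate of $H$, so $G \setminus H$ is closed and $H$ is open.

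I expect the main obstacle to be the passage from a clopen neighbourhood $V$ of $1$ to an open subgroup of $G$ contained in $V$ in the first bullet: this is the only point where continuity of multiplication interacts non-trivially with compactness, and some care is needed to arrange symmetry of $W$ so that $\bigcup_n W^n$ is genuinely a subgroup and still lies inside $V$. Everything else in the proposition is essentially topological bookkeeping.
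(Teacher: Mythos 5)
Your proof is correct: the passage from a clopen neighbourhood of the identity to an open subgroup via a symmetric $W$ with $WV \subseteq V$, followed by intersecting the finitely many conjugates, is exactly the standard argument, and the second bullet is handled correctly by the coset/compactness bookkeeping. The paper itself offers no proof of this proposition --- it is quoted as background from Ribes--Zalesskii --- and your argument is essentially the one found in that reference, so there is nothing to add.
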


Let \(G\) be a group and consider the collection of its finite index normal subgroups. If \(M,N \finormalsub G\) and \(M \leq N\), let \(p_{MN}\colon G/M \to G/N\) be the natural projection. Then \(\{G/N,p_{MN}\}\) is an inverse system of (finite) groups. 

\begin{definition}
	The \emph{profinite completion} \(\hat{G}\) of a group \(G\) is the projective limit
	\begin{equation*}
		\hat{G} \coloneq \varprojlim_{N \finormalsub G} G/N,
	\end{equation*}
	equipped with the limit topology.
\end{definition}

Thus, the profinite completion \(\hat{G}\) of \(G\) is a subgroup of the direct product \(\prod_{N \finormalsub G} G/N\). The map
\begin{equation*}
	\iota\colon G \to \hat{G} \subseteq \prod_{N \finormalsub G} G/N \colon g \mapsto (gN)_{N}
\end{equation*}
is called the \emph{canonical homomorphism} from \(G\) to \(\hat{G}\). 

\begin{proposition}
	Let \(G\) be a group, \(\hat{G}\) its profinite completion and \(\iota\colon G \to \hat{G}\) the canonical homomorphism. Then:
	\begin{itemize}
		\item \(\iota(G)\) is dense in \(\hat{G}\);
		\item \(\iota\) is injective if and only if \(G\) is residually finite.
	\end{itemize}
\end{proposition}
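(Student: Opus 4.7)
The plan is to verify both statements directly from the definition of the profinite completion as an inverse limit equipped with the limit topology. No deep machinery is required; the only real subtlety lies in the density argument, where the coherence conditions of the inverse system must be combined with the structure of basic open sets in the product topology. I expect this coherence step to be the main technical point, though it is quite elementary.

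For density, I would begin by recording a neighborhood basis of a point $x = (x_N)_N \in \hat{G}$. Since $\hat{G}$ carries the subspace topology inherited from $\prod_{N \finormalsub G} G/N$, with each finite quotient $G/N$ discrete, a basic open neighborhood of $x$ is obtained by fixing the coordinates at finitely many indices $N_1, \dots, N_k \finormalsub G$ and leaving the rest unconstrained. The decisive step is then to set $M := N_1 \cap \cdots \cap N_k$, which is again in $\finormalsub G$, write $x_M = gM$ for some $g \in G$, and invoke coherence: for each $i$, $x_{N_i} = p_{MN_i}(x_M) = gN_i = \iota(g)_{N_i}$. Hence $\iota(g)$ agrees with $x$ on all the chosen coordinates and lies in the prescribed basic open neighborhood, so $\iota(G)$ meets every nonempty open subset of $\hat{G}$.

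For injectivity, I would directly unfold the definition of $\iota$: the image $\iota(g)$ is the identity of $\hat{G}$ if and only if $gN = N$ for every $N \finormalsub G$, i.e.\ $g \in \bigcap_{N \finormalsub G} N$. Consequently, $\iota$ is injective exactly when this intersection is trivial. By \cref{def:resfin}, separability of the singleton $\{1_G\}$ is equivalent to this triviality, which is precisely the condition that $G$ be residually finite.
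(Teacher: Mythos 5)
Your proof is correct. The paper gives no proof of this proposition (it is quoted as a standard fact from Ribes--Zalesskii), and your direct verification is exactly the standard argument: for density, shrinking a basic open set of the product of discrete quotients to finitely many fixed coordinates and using coherence through \(M := N_1 \cap \cdots \cap N_k \finormalsub G\); for injectivity, identifying \(\ker\iota = \bigcap_{N \finormalsub G} N\) and invoking the (paper-stated) equivalence of residual finiteness with separability of \(\{1_G\}\).
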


Let \(G\) be a group and \(\topPF\) a topology such that \(G\) equipped with \(\topPF\) is a profinite group. Let \(U\) be an open set in \(G\) with respect to \(\topPF\). For any \(g \in U\), the set \(g^{-1}U\) is open and contains the identity \(1_G\). Thus, there exists an open normal subgroup \(N_g\) contained in \(g^{-1}U\). But then
\begin{equation*}
	U = \bigcup_{g \in U} gN_g,
\end{equation*}
and each \(N_g\) has finite index \(G\). Then \(U\) is also open in the finite-index topology  \(\topFI\) on \(G\). Thus, in general, the finite-index topology \(\topFI\) on \(G\) is finer than \(\topPF\). Those profinite groups for which both topologies coincide, which happens exactly when all finite index subgroups are open, are of particular interest to us.

\begin{definition}
	A profinite group \(G\) is called \emph{strongly complete} if it satisfies any of the following equivalent conditions:
	\begin{enumerate}[label=(\arabic*)]
		\item Every finite index subgroup of \(G\) is open;
		\item \(G\) coincides with its profinite completion \(\hat{G}\);
		\item Every homomorphism from \(G\) to a profinite group is continuous.
	\end{enumerate}
\end{definition}

We say that a topological group \(G\) is \emph{topologically finitely generated} if it admits a finitely generated subgroup \(\langle g_1,\ldots,g_n\rangle\) that is dense. In \cite{ns07-a}, Nikolov and Segal proved the following celebrated \zcref[nocap,noref]{thm:fgprofiniteissc}.

\begin{theorem}
	\label{thm:fgprofiniteissc}
	Topologically finitely generated profinite groups are strongly complete.
\end{theorem}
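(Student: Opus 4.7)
The plan is to establish condition (1) of the definition: every subgroup $H \fisub G$ of a topologically $d$-generated profinite group $G$ is open. Replacing $H$ by its normal core, which has finite index in $G$ and is contained in $H$, I may assume $H$ is normal in $G$. By Proposition~\ref{prop:profiniteproperties}, a finite index subgroup is open if and only if it is closed, so it suffices to show that $H$ is closed.

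Set $n := [G:H]$. The finite quotient $G/H$ has exponent dividing $n!$, so $g^{n!} \in H$ for every $g \in G$, and hence $H$ contains the \emph{abstract} verbal subgroup $V := \langle g^{n!} : g \in G \rangle$. If $V$ is both closed and of finite index in $G$, then $V$ is open, and any subgroup of $G$ containing an open subgroup is itself open, so $H$ will be open.

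Finite index of $V$ in $G$ follows from Zelmanov's positive solution of the restricted Burnside problem: the topologically $d$-generated profinite group $G / \overline{V}$ has exponent at most $n!$ and is therefore finite. Closedness of $V$ is the deep part, and is achieved via the verbal-width theorem of Nikolov and Segal: for every word $w$ and every integer $d$, there is a constant $f = f(d,w)$ such that in every $d$-generator finite group $F$, each element of the verbal subgroup $w(F)$ is a product of at most $f$ values of $w^{\pm 1}$. Applied to $w(x) = x^{n!}$, this means that in every finite continuous quotient $G/U$ with $U$ open normal, the verbal subgroup generated by $n!$-th powers in $G/U$ equals the image of the Cartesian product of $f$ copies of $G/U$ under the continuous word map $(g_1, \ldots, g_f) \mapsto g_1^{n!} \cdots g_f^{n!}$. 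Using this together with the inverse-limit description of $G$ and the compactness of the Cartesian product of $f$ copies of $G$, one concludes that $V$ itself equals the image of that compact set under the corresponding word map on $G$, so $V$ is the continuous image of a compact set and therefore closed in $G$.

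The main obstacle is, of course, the Nikolov--Segal verbal-width theorem: its proof is famously deep and uses the Classification of Finite Simple Groups together with a detailed analysis of normal subgroups of quasi-simple groups and an induction along the generalised Fitting series. I would not attempt to reproduce any of this and would treat it — along with Zelmanov's theorem, which handles the finite-index claim — as a black box, citing \cite{ns07-a}. The only portion I would write out in detail is the topological reduction sketched above, which converts the combinatorial input about finite $d$-generator groups into the desired topological statement about $G$.
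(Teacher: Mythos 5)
This statement is not proved in the paper at all: it is the celebrated theorem of Nikolov and Segal, quoted with the citation \cite{ns07-a}, and the paper only uses it as a black box (to conclude that profinite completions of finitely generated groups are strongly complete). So there is no internal proof to compare against; what you have written is essentially an exposition of the Nikolov--Segal proof strategy itself, with its two deep inputs (Zelmanov's solution of the restricted Burnside problem and the uniform verbal-width theorem) treated as black boxes from the very same source the paper cites. Your topological reduction is correct and standard: pass to the normal core, note that a finite-index subgroup is open iff closed, show the power-verbal subgroup \(V=\langle g^{n!}\mid g\in G\rangle\) is closed (compactness of the \(f\)-fold word map image plus the width bound in every finite continuous quotient, which is \(d\)-generated since \(G\) is topologically \(d\)-generated) and of finite index (Zelmanov gives \(G/\overline{V}\) finite, and closedness gives \(V=\overline{V}\)), hence open, hence \(H\supseteq V\) is open. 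Two small precision points: the width theorem in \cite{ns07-a} is not stated for \emph{every} word \(w\), but for \(d\)-locally finite words (and certain commutator words); the power word \(x^{n!}\) is \(d\)-locally finite precisely because of Zelmanov's theorem, so your application is legitimate but the blanket claim ``for every word'' should be weakened accordingly. Also, Zelmanov directly gives finite index of \(\overline{V}\), not of \(V\); as noted, this is harmless once closedness is established, but the two steps should be ordered that way.
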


If \(G\) is a finitely generated group and \(\hat{G}\) is its profinite completion, then \(\iota(G)\) is finitely generated and dense in \(\hat{G}\). Thus, \(\hat{G}\) is topologically finitely generated and therefore strongly complete.

\section{Twisted conjugacy separability}
\label{sec:tcs}

The definition of twisted conjugacy separability was already given in \zcref{sec:groupprops}. We shall start this section by giving some related definitions, and then study how twisted conjugacy separability behaves with respect to taking subgroups, quotients and finite extensions.

\begin{definition}
	\label{def:twiconsep}
	A group \(G\) is \emph{\((\varphi,\psi)\)-twisted conjugacy separable} if every \((\varphi,\psi)\)-twisted conjugacy class is separable, where \(\varphi,\psi \in \Hom(H,G)\) for some group \(H\).
\end{definition}

Equivalently, for any pair \(g,k\) of non-\((\varphi,\psi)\)-twisted conjugate elements of \(G\), there exists a finite quotient \(\bar{G}\) such that \(\bar{g},\bar{k}\) are non-\((\bvarphi,\bpsi)\)-twisted conjugated, where \(\bvarphi,\bpsi \in \Hom(H,\bar{G})\). 

\begin{definition}
	Let \(G\) and \(H\) be groups. We say that \(G\) is \emph{\(H\)-twisted conjugacy separable} if \(G\) is \((\varphi,\psi)\)-twisted conjugacy separable for all \(\varphi,\psi \in \Hom(H,G)\).
\end{definition}

We are now ready to study twisted conjugacy separability on profinite groups. The following \zcref[nocap,noref]{lem:tccclosed} can be found in e.g.\@ \cite[Lem.~2.3(1)]{fk22-a}.

\begin{lemma}
	\label{lem:tccclosed}
	Let \(G\) and \(H\) be profinite groups and let \(\varphi,\psi \in \Hom(H,G)\) be continuous homomorphisms. Then the \((\varphi,\psi)\)-twisted conjugacy classes are closed.
\end{lemma}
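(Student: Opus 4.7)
The plan is to realise each $(\varphi,\psi)$-twisted conjugacy class as the continuous image of the compact group $H$, and then invoke the standard fact that compact subsets of Hausdorff spaces are closed.

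Concretely, fix $g \in G$ and consider the map
\begin{equation*}
    f_g\colon H \to G,\quad h \mapsto \psi(h)\, g\, \varphi(h)^{-1}.
\end{equation*}
By construction, the image of $f_g$ is precisely $[g]_{\varphi,\psi}$. Since $\varphi$ and $\psi$ are continuous, and multiplication and inversion in the topological group $G$ are continuous, $f_g$ is continuous. As $H$ is profinite, it is compact, so $f_g(H) = [g]_{\varphi,\psi}$ is compact in $G$. Finally, $G$ is profinite and therefore Hausdorff, so any compact subset is closed; hence $[g]_{\varphi,\psi}$ is closed.

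The argument is essentially a one-liner given the definitions, so there is no genuine obstacle; the only thing to be careful about is that $f_g$ is a map of topological spaces and not a homomorphism, but continuity only requires the group operations to be continuous together with continuity of $\varphi$ and $\psi$, which is given.
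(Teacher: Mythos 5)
Your proposal is correct and is essentially identical to the paper's proof: both consider the continuous map \(h \mapsto \psi(h)g\varphi(h)^{-1}\), note its image is \([g]_{\varphi,\psi}\), and conclude via compactness of \(H\) and the Hausdorff property of \(G\) that the class is closed.
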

\begin{proof}
	Let \(g \in G\). The map \(H \to G\colon h \mapsto \psi(h)g\varphi(h)^{-1}\) is continuous, and its image is exactly \([g]_{\varphi,\psi}\). Because \(H\) is compact, so is its continuous image \([g]_{\varphi,\psi}\). Since \(G\) is Hausdorff, \([g]_{\varphi,\psi}\) is then closed.
\end{proof}

\begin{theorem}
	\label{thm:profinite}
	Let \(G\) and \(H\) be profinite groups, with \(H\) being strongly complete. Then \(G\) is \(H\)-twisted conjugacy separable.
\end{theorem}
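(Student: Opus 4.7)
The plan is to combine three ingredients already available in the excerpt: (i) the fact that all homomorphisms out of a strongly complete profinite group are continuous, (ii) \Cref{lem:tccclosed}, which shows that continuous twisted conjugacy classes are closed in the profinite topology $\tau_{PF}$ of $G$, and (iii) the observation made just before the definition of strong completeness that on any profinite group, the finite-index topology $\tau_{FI}$ is at least as fine as $\tau_{PF}$.

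Concretely, I would fix $\varphi, \psi \in \Hom(H,G)$ and $g \in G$, and argue that the class $[g]_{\varphi,\psi}$ is closed in $\tau_{FI}$ on $G$, which is exactly the statement that it is separable. Since $H$ is strongly complete, condition~(3) of the definition of strong completeness applies to the profinite group $G$, so both $\varphi$ and $\psi$ are automatically continuous, even though they are a priori only abstract group homomorphisms. \Cref{lem:tccclosed} then applies verbatim and yields that $[g]_{\varphi,\psi}$ is closed in $G$ with respect to $\tau_{PF}$.

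It remains to upgrade closedness in $\tau_{PF}$ to closedness in $\tau_{FI}$. This is immediate: as noted in the excerpt, for a profinite group $G$ one has $\tau_{PF} \subseteq \tau_{FI}$, and a finer topology has more closed sets than a coarser one (complements of open sets remain open when new opens are added). Hence $[g]_{\varphi,\psi}$ is closed in the finite-index topology of $G$, i.e.\@ separable, as required.

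There is no real obstacle here: the only nontrivial point is the appeal to strong completeness to force continuity of $\varphi$ and $\psi$, without which \Cref{lem:tccclosed} could not be invoked. Note that $G$ itself need not be strongly complete, since only closedness in $\tau_{FI}$ of $G$ (not coincidence of $\tau_{FI}$ and $\tau_{PF}$) is needed, and this closedness is automatic once one has closedness in the coarser topology $\tau_{PF}$.
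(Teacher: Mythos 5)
Your proposal is correct and follows essentially the same route as the paper: continuity of \(\varphi,\psi\) from strong completeness of \(H\), closedness of \([g]_{\varphi,\psi}\) in \(\tau_{PF}\) via \cref{lem:tccclosed}, and the comparison \(\tau_{PF}\subseteq\tau_{FI}\) to conclude separability. The only minor point is phrasing: condition (3) is a property of the strongly complete group \(H\) (with \(G\) as the profinite target), not of \(G\), but your argument clearly uses it that way, so there is no gap.
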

\begin{proof}
	Let \(\topPF\) be a topology on \(G\) such that \(G\) equipped with \(\topPF\) is profinite. Let \(\varphi,\psi \in \Hom(H,G)\), which are necessarily continuous since \(H\) is strongly complete. By \zcref{lem:tccclosed}, all \((\varphi,\psi)\)-twisted conjugacy classes are closed with respect to \(\topPF\). But \(\topPF\) is coarser than the finite-index topology \(\topFI\) on \(G\). Thus, the twisted conjugacy classes are also closed with respect to \(\topFI\) and therefore separable.
\end{proof}

 When considering automorphisms of a strongly complete profinite group \(G\), we thus get the next result.

\begin{corollary}
	Strongly complete profinite group are twisted conjugacy separable.
\end{corollary}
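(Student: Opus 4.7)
The plan is to derive this as an immediate specialisation of \cref{thm:profinite}. Let $G$ be a strongly complete profinite group. To verify that $G$ is twisted conjugacy separable in the sense of \cref{def:tcs}, I need to show that for every $\varphi \in \Aut(G)$ and every $g \in G$, the class $[g]_\varphi$ is separable in the finite-index topology.

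The observation that unlocks this is that $[g]_\varphi$ coincides with $[g]_{\varphi,\id_G}$, so it suffices to apply \cref{thm:profinite} with $H := G$. Since $G$ is strongly complete by hypothesis, the profinite group $G$ is eligible to play the role of the strongly complete group $H$ in that theorem. The theorem then asserts that $G$ is $G$-twisted conjugacy separable, i.e.\@ $(\varphi,\psi)$-twisted conjugacy separable for all homomorphisms $\varphi,\psi \in \Hom(G,G)$. Specialising to $\psi = \id_G$ and letting $\varphi$ range over $\Aut(G) \subseteq \Hom(G,G)$ gives exactly the definition of twisted conjugacy separability.

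There is no real obstacle here; the corollary is a direct instantiation of the preceding theorem. The one point worth mentioning explicitly in the writeup is that automorphisms of $G$ are, by strong completeness, automatically continuous, so they genuinely qualify as homomorphisms of profinite groups and the hypothesis of \cref{thm:profinite} is met without any extra assumption.
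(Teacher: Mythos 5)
Your proposal is correct and matches the paper's own derivation: the corollary is obtained precisely by applying \cref{thm:profinite} with \(H := G\) and specialising to \(\psi = \id_G\) and \(\varphi \in \Aut(G)\). Your remark on automatic continuity is fine but not strictly needed, since \cref{thm:profinite} only hypothesises strong completeness of \(H\) and handles continuity internally.
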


We now fill in some of the gaps left in \zcref{sec:groupprops}. First, we aim to show that a subgroup of a TCS-group need not be TCS.

\begin{example}
	\label{ex:tcssub}
	Let \(G = \SL_n(\ZZ)\) for some \(n \geq 3\), which is a finitely generated RF-group that is not conjugacy separable. In fact, by the Margulis Normal Subgroup Theorem \cite[Thm.~4]{marg91-a}, every normal subgroup of \(G\) is either finite or has finite index in \(G\), and consequently \(G\) is SRF. Let \(\hat{G}\) be its profinite completion and \(\iota\colon G \to \hat{G}\) the canonical homomorphism. Then \(\hat{G}\) is strongly complete and therefore TCS, but its subgroup \(\iota(G) \cong G\) is not even conjugacy separable.
\end{example}

Similarly, we illustrate how the quotient of a TCS-group need not be TCS.

\begin{example}
	\label{ex:tcsquo}
	Let \(p\) be a prime and consider the group of \(p\)-adic integers \(\ZZ_p\), which is a topologically finitely generated profinite group. Thus, it is strongly complete and hence TCS. Let \(\QQ_p\) denote the \(p\)-adic rationals, i.e.\@
	\begin{equation*}
		\QQ_p = \bigcup_{n \in \NN} \frac{1}{p^n} \ZZ_p,
	\end{equation*}
	which is a vector space over \(\QQ\). Using the axiom of choice, pick a \(\QQ\)-vector space epimorphism \(\QQ_p \to \QQ\). Its restriction to \(\ZZ_p\) is still surjective (see \cite[Sec.~1.2.1]{ns12-a}), so \(\ZZ_p\) has a quotient isomorphic to \(\QQ\). But \(\QQ\) is not even residually finite.
\end{example}

This example also shows that TCS-groups need not be SRF (and therefore certainly need not be NRF). Thus, the inclusion \(\mathrm{NRF} \subsetneq \mathrm{TCS}\) is strict.

The final question we would like to answer, is whether or not a finite extension of a TCS-group is again TCS. We provide a partial answer, which is strongly based on \cite[Thm.~5.2]{ft06-a}, but the general case will remain open.

\begin{proposition}
	Let \(N\) be a finite index normal subgroup of a group \(G\).
	\begin{enumerate}[label=(\alph*)]
		\item If \(N\) is TCS, then \(G\) is CS;
		\item If \(N\) is characteristic and TCS, then \(G\) is TCS.
	\end{enumerate}
\end{proposition}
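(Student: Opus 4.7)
The plan is to fix a finite transversal $t_1, \ldots, t_n$ of $N$ in $G$, decompose the (twisted) conjugacy class of interest as a finite union indexed by this transversal, and identify each piece as (a translate of) an honest twisted conjugacy class in $N$. The remaining work rests on three routine facts: left and right translations, conjugations by elements of $G$, and automorphisms of $N$ are all homeomorphisms in the finite-index topology and hence preserve separability; finite unions of separable sets are separable; and if $S \subseteq N$ is separable in $N$ and $N \finormalsub G$, then $S$ is separable in $G$, which one sees by replacing a separating $M \finormalsub N$ of a non-member $x$ by its $G$-normal core (a finite-index normal subgroup of $G$ contained in $M$).

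For (a), writing each $h \in G$ as $h = t_i n$ with $n \in N$ gives
\[
	[g] = \bigcup_{i=1}^n t_i \cdot \{ngn^{-1} : n \in N\} \cdot t_i^{-1}.
\]
Since $N$ is normal in $G$, $\iota_g|_N \in \Aut(N)$, and a direct computation yields $\{ngn^{-1} : n \in N\} = [1_N]_{\iota_g|_N} \cdot g$. The TCS hypothesis on $N$ makes $[1_N]_{\iota_g|_N}$ separable in $N$; the three routine facts above then propagate separability through the right-translate by $g$, the conjugations by the $t_i$, and the finite union. Hence $[g]$ is separable in $G$, so $G$ is CS.

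For (b), let $\varphi \in \Aut(G)$; since $N$ is characteristic, $\varphi|_N \in \Aut(N)$. The analogous decomposition with $h = t_i m$, combined with the factorisation $mg\varphi(m)^{-1} = g \cdot \iota_{g^{-1}}(m) \varphi(m)^{-1}$, yields
\[
	[g]_\varphi = \bigcup_{i=1}^n (t_i g) \cdot \{\iota_{g^{-1}}(m) \varphi(m)^{-1} : m \in N\} \cdot \varphi(t_i)^{-1}.
\]
The inner set lies in $N$, and a short check identifies it as the image under $\iota_{g^{-1}}|_N$ of the honest twisted conjugacy class $[1_N]_{(\iota_g|_N) \circ (\varphi|_N)}$ in $N$. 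The TCS hypothesis on $N$ yields separability of this class in $N$, and the routine facts above then propagate separability to $[g]_\varphi$ in $G$.

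The main obstacle is bookkeeping rather than conceptual: one has to perform the coset decomposition carefully and verify that, after stripping off an element of $G$, the partial orbit $\{mg\varphi(m)^{-1} : m \in N\}$ lands inside $N$ as a translate of an honest $\Aut(N)$-twisted conjugacy class. The characteristic hypothesis in (b) enters precisely to ensure $\varphi|_N \in \Aut(N)$; with $N$ only normal, $\varphi(N)$ might be a different finite-index normal subgroup, so the $N$-level TCS hypothesis would not directly apply, which is presumably why the general case is left open.
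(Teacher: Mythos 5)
Your proof is correct and follows essentially the same route as the paper's: decompose \([g]\) (resp.\ \([g]_\varphi\)) over a transversal of \(N\), identify each piece as a translate of a twisted conjugacy class \([1_N]_{\,\cdot\,}\) for an automorphism of \(N\) built from \(\iota_g\) (and \(\varphi|_N\)), and lift separability from \(N\) to \(G\) via finite unions and translations. The only difference is cosmetic bookkeeping: the paper absorbs the conjugations into the elements \(g_i := x_i g x_i^{-1}\) (resp.\ \(x_i g \varphi(x_i)^{-1}\)) and the automorphisms \(\iota_{g_i}|_N\) (resp.\ \((\iota_{g_i}\varphi)|_N\)), writing \([g]_\varphi = \bigcup_i [1]_{\psi_i}\cdot g_i\) directly, whereas you keep a single automorphism and conjugate/translate externally.
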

\begin{proof}
	Let \(g \in G\). Suppose that \(x_1, \ldots, x_r\) are coset representatives of \(G/N\) with \(r \coloneq [G:N]\). It is easy to see that if a set is open (closed) in the finite-index topology on \(H\), then it is open (closed) in the finite-index topology on \(G\).
	
	For each \(i = 1,\ldots,r\), set \(g_i \coloneq x_igx_i^{-1}\) and let \(\varphi_i \in \Aut(N)\) be the restriction of \(\iota_{g_i} \in \Inn(G)\) to \(N\). We can write
	\begin{align*}
		[g] &= \{ hgh^{-1} \mid h \in G\}\\
		&= \bigcup_{i=1}^r \{nx_igx_i^{-1}n^{-1} \mid n \in N\} \\
		&= \bigcup_{i=1}^r \{ng_in^{-1} \mid n \in N\} \\
		&= \bigcup_{i=1}^r \{n\varphi_i(n)^{-1}g_i \mid n \in N\} \\
		&= \bigcup_{i=1}^r [1]_{\varphi_i} \cdot g_i.
	\end{align*}
	For each \(i\), \([1]_{\varphi_i}\) is a twisted conjugacy class in \(N\). Thus, each \([1]_{\varphi_i}\) is closed in \(N\) and therefore closed in \(G\). So \([g]\) is a finite union of closed sets and therefore closed itself, which proves part (a).
	
	For part (b), one may take an analogous approach. Let \(g \in G\) and \(\psi \in \Aut(G)\), set \(g_i \coloneq x_i g \psi(x_i)^{-1}\) and let \(\psi_i \in \Aut(N)\) be the restriction of \(\iota_{g_i}\psi\) to \(N\). We can then write
	\begin{equation*}
		[g]_\psi = \bigcup_{i=1}^r [1]_{\psi_i} \cdot g_i,
	\end{equation*}
	so indeed \([g]_\psi\) is closed in the finite-index topology on \(G\).
\end{proof}

\begin{question}
	\label{qtn:isTCSclosedFE}
	Does there exist a group \(G\) with finite index normal subgroup \(N\) such that \(N\) is TCS but \(G\) is not?
\end{question}

\section{Nests and complete twisted conjugacy separability}
\label{sec:NestsAndCTCS}

The main result obtained by Stebe with regards to nests, is that polycyclic-by-finite groups are residually finite with respect to nests \cite[Thm.~5]{steb87-a}. This result was used by Gonçalves and Wong to study twisted conjugacy. Using the constructions and arguments they provide in \cite[Sec.~3]{gw05-a}, one may extract the following theorem and proof.

\begin{theorem}
	Let \(G\) and \(H\) be polycyclic-by-finite groups. Then \(G\) is \(H\)-twisted conjugacy separable.
\end{theorem}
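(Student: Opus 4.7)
The plan is to reduce separability of arbitrary $(\varphi,\psi)$-twisted conjugacy classes in $G$ to separability of a certain nest in $G$, and then invoke Stebe's theorem that polycyclic-by-finite groups are NRF \cite[Thm.~5]{steb87-a}.

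\textbf{Step 1: shift to the identity class.} For any $g \in G$ and any $h \in H$, the identity
\[
    \psi(h)\,g\,\varphi(h)^{-1}\cdot g^{-1} \;=\; \psi(h)\,(\iota_g\varphi)(h)^{-1}
\]
shows that $x \in [g]_{\varphi,\psi}$ if and only if $xg^{-1} \in [1]_{\iota_g\varphi,\psi}$. Hence $[g]_{\varphi,\psi} = [1]_{\iota_g\varphi,\psi} \cdot g$. Since right-translation by $g$ is a homeomorphism in the finite-index topology on $G$, it suffices to prove that $[1]_{\varphi',\psi}$ is separable for every $\varphi' \in \Hom(H,G)$.

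\textbf{Step 2: realise $[1]_{\varphi,\psi}$ as a nest.} Consider the subset
\[
    \prenest{N} \;:=\; \bigl\{\bigl(\psi(h),\varphi(h)^{-1}\bigr)\,\bigm|\, h \in H\bigr\} \;\subseteq\; G \times G.
\]
For $h_1,h_2 \in H$ we compute
\[
    \bigl(\psi(h_1)\psi(h_2)^{-1},\,\varphi(h_2)\varphi(h_1)^{-1}\bigr)
    \;=\;\bigl(\psi(h_1 h_2^{-1}),\,\varphi(h_1 h_2^{-1})^{-1}\bigr) \in \prenest{N},
\]
so $\prenest{N}$ is a pre-nest. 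Its image under the multiplication map $G \times G \to G$ is exactly $\{\psi(h)\varphi(h)^{-1} \mid h \in H\} = [1]_{\varphi,\psi}$, which is therefore a nest in $G$.

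\textbf{Step 3: apply Stebe's theorem.} Since $G$ is polycyclic-by-finite, $G$ is NRF, so every nest in $G$ is separable. In particular $[1]_{\iota_g\varphi,\psi}$ is separable, and hence so is $[g]_{\varphi,\psi}$ by Step 1.

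\textbf{Remark on the hypothesis and the main obstacle.} The only delicate point is spotting the right pre-nest; once $\prenest{N}$ is written down, the pre-nest axiom is a one-line verification and Stebe's theorem does the rest. Notice that the argument never uses any structural property of $H$: the pre-nest is built from arbitrary homomorphisms $\varphi,\psi \in \Hom(H,G)$, and the NRF hypothesis is imposed only on $G$. The assumption that $H$ is polycyclic-by-finite is thus not strictly necessary for this proof, foreshadowing \Cref{thm:main1intro}, which drops that hypothesis entirely.
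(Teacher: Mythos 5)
Your proof is correct, but it is not the argument the paper gives for this theorem. The paper follows Gonçalves--Wong: it works inside \(H \times G\), forms the subgroups \(A = \{(h,\psi(h))\}\) and \(B = \{(h,g_2\varphi(h)g_2^{-1})\}\), observes that the double coset \(AB\) is a nest in \(H \times G\), and separates \((1,g_1g_2^{-1})\) from \(ABN\); this is why the hypothesis that \(H\) is polycyclic-by-finite is needed there, since NRF is invoked for the product \(H \times G\). You instead stay entirely inside \(G\): you translate \([g]_{\varphi,\psi}\) to the identity class \([1]_{\iota_g\varphi,\psi}\) (a valid shift, since \(\psi(h)g\varphi(h)^{-1}g^{-1} = \psi(h)(\iota_g\varphi)(h)^{-1}\) and right translation is a homeomorphism for the finite-index topology), exhibit \([1]_{\varphi,\psi}\) as the nest coming from the pre-nest \(\{(\psi(h),\varphi(h)^{-1})\}\), and invoke NRF only for \(G\). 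This is exactly the ``alternative way'' the paper develops immediately after this theorem (\cref{prop:tcc1isnest}, \cref{lem:equivto1isenough}, \cref{prop:Htwistconj}), and, as you note, it proves more than the stated theorem: it shows a polycyclic-by-finite \(G\) is \(H\)-twisted conjugacy separable for \emph{every} group \(H\), i.e.\ the NRF-to-CTCS half of \cref{thm:maintheoremNRFisCTCS}. So your route trades the historically faithful double-coset argument for the sharper one-group nest argument; both are sound, but yours makes the hypothesis on \(H\) superfluous, which is precisely the point of the paper's subsequent section.
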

\begin{proof}
	Let \(\varphi,\psi \in \Hom(H,G)\) and let \(g_1, g_2 \in G\) with \([g_1]_{\varphi,\psi} \neq [g_2]_{\varphi,\psi}\). Consider the subgroups
	\begin{equation*}
		A \coloneq \{(h,\psi(h)) \mid h \in H\}, \quad B \coloneq \{(h,g_2\varphi(h)g_2^{-1}) \mid h \in H\}
	\end{equation*}
	of \(H \times G\). The double coset \(AB\) is a nest in \(H \times G\). Suppose, by contradiction, that \((1,g_1g_2^{-1}) \in AB\). Then there exist \(h_1,h_2 \in H\) such that
	\begin{equation*}
		(1,g_1g_2^{-1}) = (h_1h_2,\psi(h_1)g_2\varphi(h_2)g_2^{-1}).
	\end{equation*}
	Looking at the first component, we must have \(h_2 = h_1^{-1}\). Equality for the second component then becomes
	\begin{equation*}
		g_1g_2^{-1} = \psi(h_1)g_2\varphi(h_1)^{-1}g_2^{-1}.
	\end{equation*}
	If such an \(h_1\) existed, then \(g_1\) and \(g_2\) would be \((\varphi,\psi)\)-twisted conjugate, which is untrue by assumption. So \((1,g_1g_2^{-1}) \notin AB\). Let \(N \finormalsub H \times G\) such that
	\begin{equation*}
		(1,g_1g_2^{-1}) \notin ABN.
	\end{equation*}
	Then, setting \(M \coloneq G \cap N \finormalsub G\), we find that \(g_1 \notin [g_2]_{\varphi,\psi}M\).
\end{proof}

However, there is an alternative way to apply the separability of nests to \((\varphi,\psi)\)-twisted conjugacy classes.

\begin{proposition}
	\label{prop:tcc1isnest}
	Let \(G\) and \(H\) be groups and let \(\varphi,\psi \in \Hom(H,G)\). Then the twisted conjugacy class \([1]_{\varphi,\psi}\) is a nest in \(G\).
\end{proposition}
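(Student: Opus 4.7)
The plan is to exhibit an explicit pre-nest in $G \times G$ whose image under multiplication is precisely $[1]_{\varphi,\psi}$. The natural candidate, forced by the description $[1]_{\varphi,\psi} = \{\psi(h)\varphi(h)^{-1} \mid h \in H\}$, is
\[
\prenest{N} := \{(\psi(h), \varphi(h)^{-1}) \mid h \in H\} \subseteq G \times G.
\]
Multiplying the coordinates recovers $[1]_{\varphi,\psi}$ on the nose, so all that remains is to verify that $\prenest{N}$ is indeed a pre-nest.

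First I would note that $\prenest{N}$ is non-empty, since taking $h = 1_H$ yields $(1_G, 1_G) \in \prenest{N}$. Then, given two elements $(a,b) = (\psi(h_1), \varphi(h_1)^{-1})$ and $(c,d) = (\psi(h_2), \varphi(h_2)^{-1})$ in $\prenest{N}$, I would compute
\[
(ac^{-1}, d^{-1}b) = \bigl(\psi(h_1)\psi(h_2)^{-1},\; \varphi(h_2)\varphi(h_1)^{-1}\bigr) = \bigl(\psi(h_1 h_2^{-1}),\; \varphi(h_1 h_2^{-1})^{-1}\bigr),
\]
using only that $\varphi$ and $\psi$ are homomorphisms. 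Setting $h_3 := h_1 h_2^{-1} \in H$ exhibits this pair as an element of $\prenest{N}$, confirming the closure property required of a pre-nest.

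There is no real obstacle here; the statement essentially amounts to observing that the factorisation $\psi(h)\varphi(h)^{-1}$ is already in the form $a \cdot b$ with $a$ and $b$ varying in a coordinated way parametrised by $H$, and the multiplicativity of $\varphi, \psi$ makes the pre-nest condition a one-line calculation. The image under the multiplication map $G \times G \to G$ of $\prenest{N}$ is by construction $\{\psi(h)\varphi(h)^{-1} \mid h \in H\} = [1]_{\varphi,\psi}$, which completes the verification that $[1]_{\varphi,\psi}$ is a nest in $G$.
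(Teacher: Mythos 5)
Your proof is correct and follows the same route as the paper: the paper's proof simply exhibits the pre-nest \(\{(\psi(h),\varphi(h)^{-1}) \mid h \in H\}\) and notes its corresponding nest is \([1]_{\varphi,\psi}\), while you additionally spell out the one-line verification of the pre-nest condition.
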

\begin{proof}
	The set \(\{ (\psi(h),\varphi(h)^{-1}) \mid h \in H \} \) is a pre-nest in \(G \times G\). Its corresponding nest in \(G\) is exactly \([1]_{\varphi,\psi}\).
\end{proof}

At first glance, this result does not appear to be all that useful. After all, we want to study separability of \emph{all} twisted conjugacy classes, and not just the class of the identity. The following \zcref[nocap,noref]{lem:equivto1isenough} and \zcref[nocap,noref]{prop:Htwistconj} show that studying the separability of the twisted conjugacy class of the identity is, in fact, sufficient.

\begin{lemma}
	\label{lem:equivto1isenough}
	Let \(G\) and \(H\) be groups, let \(g,k \in G\) and let \(\varphi,\psi \in \Hom(H,G)\). Then
	\begin{equation*}
		[g]_{\varphi,\psi} = [k]_{\varphi,\psi} \iff [gk^{-1}]_{\iota_k\varphi,\psi} = [1]_{\iota_k\varphi,\psi}.
	\end{equation*}
\end{lemma}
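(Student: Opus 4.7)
The plan is to prove this lemma by a direct unfolding of the definitions on both sides and observing that the two conditions differ only by a right-multiplication by $k^{-1}$.

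First I would rewrite the left-hand side. By definition, $[g]_{\varphi,\psi} = [k]_{\varphi,\psi}$ if and only if there exists $h \in H$ such that $g = \psi(h) k \varphi(h)^{-1}$. Next I would unfold the right-hand side. Since $\iota_k\varphi$ sends $h$ to $k\varphi(h)k^{-1}$, we have $(\iota_k\varphi)(h)^{-1} = k\varphi(h)^{-1}k^{-1}$. Therefore $[gk^{-1}]_{\iota_k\varphi,\psi} = [1]_{\iota_k\varphi,\psi}$ if and only if there exists $h \in H$ such that
\begin{equation*}
    gk^{-1} = \psi(h) \cdot 1 \cdot (\iota_k\varphi)(h)^{-1} = \psi(h) k \varphi(h)^{-1} k^{-1}.
\end{equation*}

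Finally I would note that the equation $g = \psi(h) k \varphi(h)^{-1}$ is obtained from the equation $gk^{-1} = \psi(h) k \varphi(h)^{-1} k^{-1}$ simply by right-multiplying both sides by $k$, and conversely. Hence the same $h$ witnesses both equivalences, and the two conditions are equivalent.

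There is no real obstacle here; the lemma is a matter of bookkeeping. The substantive content is the conceptual point that, up to twisting $\varphi$ by an inner automorphism and translating, one may always reduce the question of separating $[g]_{\varphi,\psi}$ from $[k]_{\varphi,\psi}$ to that of separating the twisted conjugacy class of the identity from a single element, which is precisely what is needed to apply \cref{prop:tcc1isnest}.
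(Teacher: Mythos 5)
Your proof is correct: unfolding both conditions with $(\iota_k\varphi)(h)^{-1}=k\varphi(h)^{-1}k^{-1}$ and right-multiplying by $k$ shows the same $h$ witnesses both, which is exactly the routine verification the paper has in mind (it states \cref{lem:equivto1isenough} without proof, precisely because it is this kind of bookkeeping). Nothing to add.
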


\begin{proposition}
	\label{prop:Htwistconj}
	Let \(G\) and \(H\) be groups. The following are equivalent:
	\begin{enumerate}[label=(\arabic*)]
		\item \(G\) is \(H\)-twisted conjugacy separable.
		\item For all \(\varphi,\psi \in \Hom(H,G)\), the twisted conjugacy class \([1]_{\varphi,\psi}\) is separable.
	\end{enumerate}
\end{proposition}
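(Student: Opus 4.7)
The implication (1) $\Rightarrow$ (2) is immediate, since $[1]_{\varphi,\psi}$ is itself a particular $(\varphi,\psi)$-twisted conjugacy class and separability of all such classes clearly includes this one. The substance lies in (2) $\Rightarrow$ (1), and the plan is to reduce separability of an arbitrary $(\varphi,\psi)$-twisted conjugacy class to separability of the class of the identity for a \emph{modified} pair of homomorphisms, using \Cref{lem:equivto1isenough} as the bridge.

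Concretely, assuming (2), I would fix $\varphi,\psi \in \Hom(H,G)$ and $g,k \in G$ with $[g]_{\varphi,\psi} \neq [k]_{\varphi,\psi}$, and the goal is to produce $N \finormalsub G$ with $g \notin [k]_{\varphi,\psi} \cdot N$. By \Cref{lem:equivto1isenough}, the hypothesis $[g]_{\varphi,\psi} \neq [k]_{\varphi,\psi}$ translates into the statement $gk^{-1} \notin [1]_{\iota_k \varphi, \psi}$. Since $\iota_k \varphi$ still lies in $\Hom(H,G)$, hypothesis (2) applies and yields a finite index normal subgroup $N \finormalsub G$ such that $gk^{-1} \notin [1]_{\iota_k \varphi, \psi} \cdot N$.

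The final step is to push this separation back through \Cref{lem:equivto1isenough} to conclude something about $[g]_{\varphi,\psi}$ and $k$ in the quotient $\bar{G} := G/N$. The only point that deserves genuine attention, and which I view as the main (minor) obstacle, is verifying that forming $\iota_k \varphi$ commutes with projection: if $p\colon G \to \bar{G}$ denotes the quotient map, then $p \circ (\iota_k \varphi) = \iota_{\bar{k}} \bvarphi$, so the image of $[1]_{\iota_k \varphi, \psi}$ in $\bar{G}$ is precisely $[\bar{1}]_{\iota_{\bar{k}} \bvarphi, \bpsi}$. Granting this compatibility, the non-membership $\overline{gk^{-1}} \notin [\bar{1}]_{\iota_{\bar{k}} \bvarphi, \bpsi}$ follows, and a second application of \Cref{lem:equivto1isenough}, now performed inside $\bar{G}$, converts it into $[\bar{g}]_{\bvarphi,\bpsi} \neq [\bar{k}]_{\bvarphi,\bpsi}$, i.e.\ $g \notin [k]_{\varphi,\psi} \cdot N$, as required.
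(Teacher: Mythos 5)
Your proposal is correct and follows essentially the same route as the paper: both directions hinge on \cref{lem:equivto1isenough}, applied once in \(G\) to pass to the pair \((\iota_k\varphi,\psi)\), then again in the finite quotient after using separability of \([1]_{\iota_k\varphi,\psi}\) and the compatibility \(p\circ(\iota_k\varphi)=\iota_{\bar{k}}\bvarphi\), which the paper uses implicitly and you make explicit. No gaps.
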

\begin{proof}
	It is easy to see that (1) implies (2). For the reverse implication, let \(g,k \in G\) and \(\varphi,\psi \in \Hom(H,G)\) with \([g]_{\varphi,\psi} \neq [k]_{\varphi,\psi}\). Applying \zcref{lem:equivto1isenough}, we find that \([gk^{-1}]_{\iota_k\varphi,\psi} \neq [1]_{\iota_k\varphi,\psi}\). Then there exists some\(N \finormalsub G\) such that \[[\bar{g}\bar{k}^{-1}]_{\iota_{\bar{k}}\bvarphi,\bpsi} \neq [\bar{1}]_{\iota_{\bar{k}}\bvarphi,\bpsi}\] in \(\bar{G} \coloneq G/N\), where \(\bvarphi, \bpsi \in \Hom(H,\bar{G})\). Applying \zcref{lem:equivto1isenough} again, we get \([\bar{g}]_{\bvarphi,\bpsi} \neq [\bar{k}]_{\bvarphi,\bpsi}\).
\end{proof}

Since polycyclic-by-finite groups are residually finite with respect to nests, and the twisted conjugacy class \([1]_{\varphi,\psi}\) is always a nest irrespective of the group \(H\) and the homomorphisms \(\varphi,\psi\colon H \to G\), \zcref{prop:Htwistconj} implies that polycyclic-by-finite groups are \(H\)-twisted conjugacy separable for every group \(H\). We give this property a name:

\begin{definition}
	A group \(G\) is \emph{completely twisted conjugacy separable (CTCS)} if it is \(H\)-twisted conjugacy separable for every group \(H\).
\end{definition}

At this point, it is clear that being residually finite with respect to nests implies complete twisted conjugacy separability. To prove the converse, we first need to understand the structure of pre-nests, which was described by Stebe in \cite[Thm.~1]{steb76-a}.

\begin{theorem}
	\label{thm:neststructure}
	Let \(S_1\) and \(S_2\) be subgroups of a group \(G\). Let \(K_i\) be a normal subgroup of \(S_i\) for \(i=1,2\) such that \(S_1/K_1\) is isomorphic to \(S_2/K_2\), and let \(\theta\colon S_1/K_1 \to S_2/K_2\) be an isomorphism. The set of all elements of \(G \times G\) of the form \((ac,\theta(c^{-1})b)\), with \(a\) in \(K_1\), \(b\) in \(K_2\) and \(c\) a coset representative of \(S_1\) modulo \(K_1\) is a pre-nest in \(G \times G\). Conversely, every pre-nest in \(G \times G\) may be obtained by this construction.
\end{theorem}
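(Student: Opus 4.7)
The statement has two parts: every set of the given form is a pre-nest, and conversely every pre-nest in $G \times G$ arises from such data. My plan is to treat the two directions separately, using throughout the reformulation of the described set as
\begin{equation*}
	\prenest{N} = \bigl\{(s_1, s_2) \in S_1 \times S_2 : s_2 K_2 = \theta(s_1^{-1} K_1) \bigr\},
\end{equation*}
which encodes the data of the theorem without reference to specific coset representatives or lifts: as $a$ ranges over $K_1$, $b$ over $K_2$, and $c$ over representatives of $S_1/K_1$, the pair $(ac, \theta(c^{-1})b)$ sweeps out exactly $cK_1 \times \theta(c^{-1} K_1)$.

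For the forward direction, take $(s_1, s_2), (t_1, t_2) \in \prenest{N}$. Both $s_1 t_1^{-1} \in S_1$ and $t_2^{-1} s_2 \in S_2$ hold since the $S_i$ are subgroups, so only the coset identity remains: $t_2^{-1} s_2 K_2 = \theta(t_1^{-1} K_1)^{-1} \theta(s_1^{-1} K_1) = \theta(t_1 s_1^{-1} K_1) = \theta((s_1 t_1^{-1})^{-1} K_1)$, where the middle equality uses that $\theta$ is a homomorphism.

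For the converse, start with an arbitrary pre-nest $\prenest{N}$ and read off the desired data: set $S_i := \pi_i(\prenest{N})$, $K_1 := \{a : (a,1) \in \prenest{N}\}$ and $K_2 := \{b : (1,b) \in \prenest{N}\}$. After first observing that $(1,1) \in \prenest{N}$ (close any $(a,b) \in \prenest{N}$ with itself), iterated application of the pre-nest condition shows that each $S_i$ is a subgroup of $G$ and each $K_i$ a subgroup of $S_i$. Normality $K_1 \trianglelefteq S_1$ is slightly subtler: for $a \in K_1$ and $(s,t) \in \prenest{N}$, closing $(s,t)$ with $(a,1)$ produces $(sa^{-1}, t) \in \prenest{N}$, after which closing $(s,t)$ with $(sa^{-1}, t)$ yields the desired $(sas^{-1}, 1) \in \prenest{N}$; the argument for $K_2$ is symmetric. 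The map $\theta\colon S_1/K_1 \to S_2/K_2$ can then be defined by $\theta(aK_1) := b^{-1} K_2$ whenever $(a,b) \in \prenest{N}$, with well-definedness and bijectivity routine from the pre-nest axiom. The homomorphism property comes from the identity $(aa'^{-1}, (b')^{-1} b) \in \prenest{N}$ produced by closing $(a,b)$ with $(a',b')$, which translates to $\theta(aa'^{-1} K_1) = \theta(aK_1) \theta(a'K_1)^{-1}$, a standard characterisation of homomorphisms. Finally, to recover $\prenest{N}$: given $(s_1, s_2)$ with $s_2 K_2 = \theta(s_1^{-1} K_1)$, pick any $(s_1, b) \in \prenest{N}$; then $s_2 = bk$ for some $k \in K_2$, and closing $(s_1^{-1}, b^{-1}) \in \prenest{N}$ with $(1, k) \in \prenest{N}$ produces $(s_1, bk) = (s_1, s_2) \in \prenest{N}$.

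The main obstacle I anticipate is the combinatorial bookkeeping in the converse direction: each property (normality of the $K_i$, the homomorphism property of $\theta$, recovery of $\prenest{N}$) requires feeding the pre-nest closure exactly the right pair of elements, and the slightly antihomomorphic flavour of that closure --- visible in the $(ac^{-1}, d^{-1}b)$ pattern --- is precisely why a formula such as $\theta(aa'^{-1}) = \theta(a) \theta(a')^{-1}$ arises more naturally than the symmetric $\theta(aa') = \theta(a) \theta(a')$.
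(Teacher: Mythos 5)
The paper does not prove this statement at all --- it is quoted from Stebe \cite[Thm.~1]{steb76-a} --- so there is no internal proof to compare against. Judged on its own, your argument is essentially correct and takes the natural route (presumably Stebe's as well): rewrite the constructed set coset-wise as \(\{(s_1,s_2)\in S_1\times S_2 : s_2K_2=\theta(s_1^{-1}K_1)\}\), check the closure condition directly for the forward direction, and for the converse extract \(S_i\), \(K_i\), \(\theta\) from the pre-nest and verify each property by feeding appropriate pairs to the closure operation. Your normality argument for \(K_1\) is correct, your definition \(\theta(aK_1):=b^{-1}K_2\) is exactly the right one to match the \((ac,\theta(c^{-1})b)\) parametrisation, and the steps you label routine (well-definedness and bijectivity of \(\theta\), plus the inclusion of \(\prenest{N}\) into the recovered set, which is immediate from the definition of \(\theta\)) really are routine given the manipulations you already display. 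For completeness you should also note that the constructed set is non-empty (it contains \((1,1)\)), since that is part of the definition of a pre-nest.

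One small slip in the final step: with the convention you use consistently elsewhere (closing \((a,b)\) with \((c,d)\) yields \((ac^{-1},d^{-1}b)\)), closing \((s_1^{-1},b^{-1})\) with \((1,k)\) produces \((s_1^{-1},k^{-1}b^{-1})\), not \((s_1,bk)\). The intended conclusion is reached by swapping the arguments: closing \((1,k)\) with \((s_1^{-1},b^{-1})\) gives \((1\cdot(s_1^{-1})^{-1},(b^{-1})^{-1}k)=(s_1,bk)=(s_1,s_2)\); alternatively, close \((1,1)\) with the element \((s_1^{-1},k^{-1}b^{-1})\) you did obtain. This is a trivially repairable oversight, not a gap in the argument.
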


It should be noted that there is some slight abuse of notation in the \zcref[nocap,noref]{thm:neststructure} above: ``\(\theta(c^{-1})\)'' should be interpreted as ``a coset representative of \(\theta(c^{-1}K_1)\)''. In the case of a (\(\varphi,\psi\))-twisted conjugacy class with \(\varphi,\psi \in \Hom(H,G)\), the groups in question are given by

\begin{align*}
	S_1 = \psi(H) && K_1 = \psi(\ker(\varphi))\\
	 S_2 = \varphi(H) && K_2 = \varphi(\ker(\psi))
\end{align*}
and \(\theta\) is given by
\begin{equation*}
	\theta\colon \frac{S_1}{K_1} \to \frac{S_2}{K_2} \colon \psi(h)K_1 \mapsto \varphi(h)K_2.
\end{equation*}

Using the structure description of pre-nests, we may prove the following converse of \zcref{prop:tcc1isnest}.

\begin{proposition}
	\label{prop:nestistcc1}
	Let \(G\) be a group and \(\nest{N}\) a nest in \(G\). Then there exist \(H \leq G \times G\) and \(\varphi,\psi \in \Hom(H,G)\) such that \(\nest{N} = [1]_{\varphi,\psi}\).
\end{proposition}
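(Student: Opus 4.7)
The plan is to reverse-engineer Stebe's structure theorem (\cref{thm:neststructure}) into a single homomorphism pair. Let $\prenest{N} \subseteq G \times G$ be a pre-nest whose image under multiplication is $\nest{N}$. By \cref{thm:neststructure}, the pre-nest is determined by subgroups $S_1, S_2 \leq G$, normal subgroups $K_i \normalsub S_i$, and an isomorphism $\theta\colon S_1/K_1 \to S_2/K_2$, with $\prenest{N}$ consisting of all elements $(ac, \theta(c^{-1}) b)$ where $a \in K_1$, $b \in K_2$, and $c$ runs through coset representatives of $S_1$ modulo $K_1$.

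First I would introduce the candidate subgroup
\[
H := \bigl\{ (x,y) \in S_1 \times S_2 \,\bigl|\, \theta(x K_1) = y K_2 \bigr\},
\]
and note that $H$ is a subgroup of $G \times G$, because it is the kernel of the homomorphism $S_1 \times S_2 \to S_2/K_2\colon (x,y) \mapsto \theta(xK_1)^{-1} \cdot yK_2$. I would then take $\psi,\varphi \in \Hom(H,G)$ to be the projections onto the first and second coordinates, so that
\[
[1]_{\varphi,\psi} = \bigl\{ \psi(h) \varphi(h)^{-1} \,\bigl|\, h \in H \bigr\} = \bigl\{ x y^{-1} \,\bigl|\, (x,y) \in H \bigr\}.
\]

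The remaining task is to verify the set equality $[1]_{\varphi,\psi} = \nest{N}$. For the inclusion $[1]_{\varphi,\psi} \subseteq \nest{N}$, given $(x,y) \in H$, I would write $x = ac$ with $a \in K_1$ and $c$ a chosen coset representative, whence $yK_2 = \theta(cK_1)$ forces $y = \theta(c)\, b'$ for some $b' \in K_2$ (where $\theta(c)$ denotes a fixed representative of $\theta(cK_1)$); then $xy^{-1} = ac\,\theta(c)^{-1} b''$ after absorbing $\theta(c)\, b'^{-1} \theta(c)^{-1} \in K_2$ using normality of $K_2$ in $S_2$, which is exactly the form of a nest element. For the reverse inclusion, given a nest element $ac\,\theta(c^{-1}) b$, I would set $x := ac$ and $y := b^{-1} \theta(c)$, check directly that $(x,y) \in H$ and that $xy^{-1}$ recovers the element.

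The construction itself is essentially forced by the proof of \cref{prop:tcc1isnest}: there, a pre-nest was built by reading off pairs $(\psi(h), \varphi(h)^{-1})$, and here we invert that procedure by building $H$ as a fibre product over the identification $\theta$. The main obstacle is really only bookkeeping — tracking how the choices of coset representatives on the pre-nest side translate into honest elements of $H$ — and making sure the normality of $K_1 \normalsub S_1$ and $K_2 \normalsub S_2$ is used to absorb stray factors into the $K_i$. No deeper ingredient beyond \cref{thm:neststructure} is needed.
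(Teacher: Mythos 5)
Your proposal is correct and follows essentially the same route as the paper: the subgroup \(H = \{(x,y) \in S_1 \times S_2 \mid \theta(xK_1) = yK_2\}\) you define is exactly the paper's preimage in \(S_1 \times S_2\) of the graph subgroup \(D = \{(\bar{c},\theta(\bar{c}))\}\), and both arguments then take \(\psi,\varphi\) to be the coordinate projections and identify \(\nest{N}\) with \([1]_{\varphi,\psi}\). Your verification of the two inclusions (using normality of \(K_2\) to absorb representative-choice discrepancies) is the same bookkeeping the paper leaves implicit, so there is nothing to add.
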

\begin{proof}
	Let \(\nest{N}\) be a nest in \(G\) and let \(\nest{N}_\times\) be a pre-nest  in \(G \times G\) inducing \(\nest{N}\). Let \(S_1,S_2,K_1,K_2,\theta\) determine \(\nest{N}_\times\) as in \zcref{thm:neststructure}. Consider the subgroup \(D \leq S_1/K_1 \times S_2/K_2\) given by
	\begin{equation*}
		D \coloneq \{ (\bar{c},\theta(\bar{c})) \mid \bar{c} \in S_1/K_1\}.
	\end{equation*}
	For every \(\bar{c} \in S_1/K_1\), fix some preimage \(c \in S_1\). Similarly, with the same abuse of notation as mentioned before, fix a preimage \(\theta(c) \in S_2\) of \(\theta(\bar{c})\). Then \(H\), the preimage of \(D\) in \(S_1 \times S_2\), is exactly the set
	\begin{equation*}
		H = \{ (ac,b^{-1}\theta(c)) \mid a \in K_1, b \in K_2, \bar{c} \in S_1/K_1 \}.
	\end{equation*}
	Let \(\psi\) and \(\varphi\) be the projections to the first and second component respectively, composed with the inclusion in \(G\), such that \(\psi,\varphi \in \Hom(H,G)\). It is now easy to see that
	\begin{equation*}
		\nest{N}_\times = \{ (\psi(h),\varphi(h)^{-1}) \mid h \in H\},
	\end{equation*}
	and therefore \(\nest{N} = [1]_{\varphi,\psi}\).
\end{proof}

Combined, \zcref{prop:tcc1isnest,prop:nestistcc1} lead us to our first main result:

\begin{theorem}[manual-num=A]
    \label{thm:mainthmA}
	A group is residually finite with respect to nests if and only if it is completely twisted conjugacy separable.
\end{theorem}

Stebe already showed that the family of NRF-groups is closed under taking subgroups and finite extensions, but he did not study quotients. We do so below.

\begin{proposition}
	\label{prop:NRFclosedquotients}
	The family of NRF-groups is closed under taking quotients.
\end{proposition}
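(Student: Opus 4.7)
My plan is to work directly with nests rather than routing through the just-established equivalence with CTCS (\Cref{thm:maintheoremNRFisCTCS}), since lifting a nest along a quotient map is very natural.

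Let $p\colon G \to \bar{G} := G/N$ be a quotient of an NRF-group $G$, and let $\nest{N}$ be a nest in $\bar{G}$ arising from some pre-nest $\prenest{N} \subseteq \bar{G} \times \bar{G}$. The first step will be to show that the full preimage $p^{-1}(\nest{N})$ is itself a nest in $G$. For this I would set $\prenest{N}' := (p \times p)^{-1}(\prenest{N}) \subseteq G \times G$ and verify two things: (a) $\prenest{N}'$ is a pre-nest, which is immediate from the pre-nest property of $\prenest{N}$ together with $p \times p$ being a homomorphism; and (b) the nest associated to $\prenest{N}'$ equals $p^{-1}(\nest{N})$. The inclusion $\subseteq$ in (b) is clear, and for the reverse direction, given $g \in p^{-1}(\nest{N})$, one writes $p(g) = xy$ with $(x,y) \in \prenest{N}$, picks any lift $\tilde{x} \in G$ of $x$ and sets $\tilde{y} := \tilde{x}^{-1} g$, so that $(\tilde{x}, \tilde{y}) \in \prenest{N}'$ realises $g$ as $\tilde{x}\tilde{y}$.

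Since $G$ is NRF, $p^{-1}(\nest{N})$ is then separable in $G$. To transfer this separability to $\bar{G}$, given $\bar{x} \in \bar{G} \setminus \nest{N}$ I would pick any lift $x \in p^{-1}(\bar{x})$; then $x \notin p^{-1}(\nest{N})$, so there exists $M \finormalsub G$ with $x \notin p^{-1}(\nest{N}) \cdot M$. The subgroup $MN$ is still finite-index normal in $G$, and crucially contains $N$. Because $p^{-1}(\nest{N})$ is $N$-saturated as a preimage under $p$, one has $p^{-1}(\nest{N}) \cdot M = p^{-1}(\nest{N}) \cdot MN$, so $x$ remains separated from $p^{-1}(\nest{N})$ by $MN$. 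Projecting through $p$, the image $\bar{M} := MN/N$ is a finite-index normal subgroup of $\bar{G}$, and a short calculation (using that the inverse image under $p$ of $\nest{N} \cdot \bar{M}$ is exactly $p^{-1}(\nest{N}) \cdot MN$) gives $\bar{x} \notin \nest{N} \cdot \bar{M}$.

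The main obstacle is this last bookkeeping step: an arbitrary finite-index normal subgroup $M$ separating $x$ from $p^{-1}(\nest{N})$ in $G$ need not contain $N$ and so does not directly descend to $\bar{G}$. The $N$-saturation of $p^{-1}(\nest{N})$ is precisely what allows the replacement of $M$ by $MN$ without breaking the separation from $x$.
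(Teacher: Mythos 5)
Your proposal is correct and follows essentially the same route as the paper: lift the pre-nest via $(p\times p)^{-1}$, note that the resulting nest in $G$ is exactly $p^{-1}(\nest{N})$, apply NRF of $G$, and descend the separating finite-index subgroup using the $N$-saturation of the preimage (the paper phrases this last step as a short contradiction with $\bar{K}:=p(K)$ rather than replacing $M$ by $MN$, but it is the same bookkeeping).
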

\begin{proof}
	Let \(G\) be an NRF-group, \(N \normalsub G\), and let \(p\) be the natural projection from \(G\) to \(\bar{G} \coloneq G/N\). Suppose that \(\bar{g} \in \bar{G}\) and that \(\bar{\nest{N}}\) is a nest in \(\bar{G}\) with \(\bar{g} \notin \bar{\nest{N}}\). Let \(\bar{\nest{N}}_\times\) be a pre-nest in \(\bar{G} \times \bar{G}\) corresponding to \(\bar{\nest{N}}\). Then the set
	\begin{equation*}
		\nest{N}_\times \coloneq \{ (g_1,g_2) \in G \mid (p(g_1),p(g_2)) \in \bar{\nest{N}}_\times\}
	\end{equation*}
	is a pre-nest in \(G \times G\), and the corresponding nest \(\nest{N}\) in \(G\) is exactly \(p^{-1}(\bar{\nest{N}})\).
	Let \(g \in p^{-1}(\bar{g})\), then \(g \notin \nest{N}\) and thus there exists some \(K \finormalsub G\) such that \(g \notin \nest{N}K\). Set \(\bar{K} \coloneq p(K)\).
	
	Suppose, by contradiction, that \(\bar{g} \in \bar{\nest{N}}\bar{K}\). Then there exist \(n \in N\), \(h \in \nest{N}\) and \(k \in K\) such that \(g = nhk\). But \(nh \in \nest{N}\), so \(g \in \nest{N}K\), which contradicts the choice of \(K\). Therefore indeed \(\bar{g} \notin \bar{\nest{N}}\bar{K}\) with \(\bar{K} \finormalsub \bar{G}\).
\end{proof}

We can update \zcref{tbl:rfproperties} and Diagram \eqref{eq:diagram} with our new findings, the results can be found in \zcref{tbl:rfproperties2} and Diagram \eqref{eq:diagram2}.
\begin{equation}
	\label{eq:diagram2}
\begin{tikzcd}[row sep=small,column sep=small,ampersand replacement=\&]
	\text{NRF}\arrow[r,phantom,"\subseteq"]\arrow[dd,phantom,"=" rotate=90]\& \text{ERF}\arrow[r,phantom,"\subsetneq"] \& \text{SRF}\arrow[dr,phantom,"\subsetneq", sloped, start anchor=south east] \& \\
	\& \& \&\text{RF} \\
	\text{CTCS}\arrow[r,phantom,"\subsetneq"]\& \text{TCS}\arrow[r,phantom,"\subsetneq"]\&\text{CS} \arrow[ur,phantom,"\subsetneq" description, sloped]\& 
\end{tikzcd}
\end{equation}

\begin{table}[hb]
	\caption{Separability properties of groups (revisited)}
	\label{tbl:rfproperties2}
	\begin{tabular}{cccc}
		\toprule
		separability&  subgroup- & quotient- & closed under \\
		property & closed&closed&finite extensions\\
		\midrule
		RF &Yes	& No & Yes  \\
		CS &No & No & No\\
		TCS  &No & No & ?\\
		CTCS  &Yes & Yes & Yes\\
		SRF &No &	Yes & Yes\\
		ERF  &Yes & Yes & Yes\\
		NRF &Yes & Yes & Yes\\
		\bottomrule
	\end{tabular}
\end{table}
One may wonder whether any additional inclusions could be added to Diagram \eqref{eq:diagram2}. \zcref{ex:tcsquo} provides a group that is TCS but not SRF, so certainly we have \(\mathrm{TCS} \not\subseteq \mathrm{SRF}\). It follows that \(\mathrm{TCS} \not\subseteq \mathrm{ERF}\), \(\mathrm{CS} \not\subseteq \mathrm{SRF}\) and \(\mathrm{CS} \not\subseteq \mathrm{ERF}\). 

In \zcref{ex:tcssub} we showed that \(\SL_n(\ZZ)\) (\(n \geq 3)\) is an SRF-group that is not CS. Additionally, there exist finitely generated metabelian (and hence SRF) groups that are not CS, see e.g.\@ \cite{wehr73-a,wehr76-a}. Therefore \(\mathrm{SRF} \not\subseteq \mathrm{CS}\) and \(\mathrm{SRF} \not\subseteq \mathrm{TCS}\). The only inclusions that could still be added to the diagram, are \(\mathrm{ERF} \subseteq \mathrm{CS}\), \(\mathrm{ERF} \subseteq \mathrm{TCS}\) and \(\mathrm{ERF} \subseteq \mathrm{NRF}\).

The existence of an ERF-group that is not CS would imply that none of these inclusions hold true. Such a group, if it exists at all, seems hard to come by. In particular, none of the groups mentioned in the previous paragraph is ERF. Indeed, \(\SL_n(\ZZ)\) contains the free group \(F_2\) (which is not ERF); by a result of Jeanes and Wilson \cite{jw78-a}, a finitely generated soluble ERF-group must be polycyclic and hence CS. The following question thus appears to be presently unresolved.

\begin{question}
	Do there exist ERF-groups that are not CS, TCS or NRF?
\end{question}

\section{Nilpotent-by-finite groups}
\label{sec:nilpotent}

The study of twisted conjugacy on nilpotent-by-finite groups has proven to be very successful. As such, this family of groups is a natural starting point to study complete twisted conjugacy separability. Since finitely generated nilpotent-by-finite groups are polycyclic-by-finite, we already know they are completely twisted conjugacy separable. 

As demonstrated by Wehrfritz in \cite[Ex.~2]{wehr73-a}, there exists a residually finite nilpotent group which is not conjugacy separable. Thus, we opt to restrict ourselves to studying the family of nilpotent-by-finite groups that are strongly residually finite. These groups were characterised by Menth \cite[Thm.~5.15]{ment02-a}, see also \cite[Sec.~4]{rrv09-a} and \cite[Sec.~2]{wehr11-a} for more details and remarks.

\begin{theorem}
	\label{thm:nilperfgroup}
	Let \(G\) be a nilpotent group with torsion subgroup \(\tau(G)\). Then \(G\) is SRF if and only if the following conditions hold:
	\begin{itemize}
		\item \(G/\tau(G)\) has no quasicyclic section;
		\item each \(p\)-component of \(\tau(G)\) is abelian-by-finite and has finite exponent.
	\end{itemize}
\end{theorem}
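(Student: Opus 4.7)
The plan is to split into necessity and sufficiency, and in each direction reduce to understanding the torsion-free quotient $G/\tau(G)$ and the torsion subgroup $\tau(G)$ separately. This split is natural because $\tau(G)$ is a characteristic subgroup in any nilpotent group, so both factors inherit quotient-closed properties from $G$, and $\tau(G)$ decomposes as a direct sum of its $p$-components.

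For the necessity direction, I would argue as follows. Since SRF is quotient-closed, both $G/\tau(G)$ and (via a suitable argument) the various $p$-components are constrained. For the torsion-free quotient, the task is to show that a quasicyclic section $H/K$ in $G/\tau(G)$ would force a non-RF quotient of $G$. In torsion-free nilpotent groups this uses the isolator machinery: passing to the isolator of $K$ in $H$ keeps the quotient quasicyclic (since $\ZZ(p^\infty)$ is already isolated), and the normal closure of such an isolated section in a torsion-free nilpotent group sits inside a suitable central quotient one can pull back to a genuine normal subgroup $N \normalsub G$ with $G/N$ containing a $\ZZ(p^\infty)$ quotient — contradicting SRF. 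For the $p$-components, I would show that if some $p$-component of $\tau(G)$ fails to be abelian-by-finite of finite exponent, then one can construct a quotient containing either a quasicyclic subgroup (if the exponent is infinite) or an infinite-rank elementary abelian section twisted by an unbounded nilpotent action (if it is not abelian-by-finite), and in either case exhibit a non-residually-finite quotient.

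For the sufficiency direction, I would let $N \normalsub G$ and prove that $G/N$ is RF. The key reduction is: a nilpotent group is RF if and only if its torsion subgroup is RF (the forward direction is immediate; for the reverse one uses that torsion-free nilpotent groups are RF by Hirsch, together with the fact that in a nilpotent group any non-identity element either lies in the torsion subgroup or survives in the torsion-free quotient). So it suffices to show $\tau(G/N)$ is RF. I would then decompose $\tau(G/N)$ into $p$-components and handle each one. The torsion-free quotient $(G/N)/\tau(G/N)$ is a quotient of $G/\tau(G)$, which by hypothesis has no quasicyclic section; a torsion-free nilpotent group without quasicyclic sections is RF (this is essentially a result of Mal'cev, via the Mal'cev completion / isolator calculus). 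For each $p$-component of $\tau(G/N)$, the image comes from a $p$-component of $\tau(G)$, which is abelian-by-finite of finite exponent; such groups are RF, and this property is quotient-closed.

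The main obstacle will be the careful tracking of torsion under quotients: elements of $G$ that are non-torsion may well become torsion in $G/N$, so $\tau(G/N)$ is in general strictly larger than the image of $\tau(G)$. Controlling this requires the torsion-free-nilpotent structure (isolators in the upper central series) and is precisely where the ``no quasicyclic section'' hypothesis does the work — without it, new torsion created in $G/N$ could accumulate into a $\ZZ(p^\infty)$ piece. A secondary subtlety is that $\tau(G)$ is the direct sum of its $p$-components, so constraints on individual $p$-components must be shown to assemble into a constraint on the whole torsion subgroup; this is where the finite-exponent assumption is essential, as without it cross-prime interactions in $\tau(G/N)$ could again produce quasicyclic sections.
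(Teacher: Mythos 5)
A preliminary remark: the paper does not prove this statement at all — it quotes it as Menth's characterisation of nilpotent SRF-groups (with pointers to the literature for details) — so your proposal has to stand on its own, and it does not. The fatal gap is in the sufficiency direction. Your ``key reduction'' — that a nilpotent group is RF if and only if its torsion subgroup is RF, because ``torsion-free nilpotent groups are RF by Hirsch'' — is false: Hirsch's theorem concerns polycyclic (in particular finitely generated nilpotent) groups, while \(\QQ\) is a torsion-free nilpotent group that is not residually finite. The repaired version you implicitly fall back on, namely that \(\tau(X)\) RF together with \(X/\tau(X)\) RF forces a nilpotent group \(X\) to be RF, is also false. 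Take \(X\) to be the central product of infinitely many copies \(K_n=\langle a_n,b_n,z_n\rangle\) of the integral Heisenberg group with centre reduced modulo \(p\), all the \(z_n\) amalgamated to a single central element \(t\) of order \(p\). Then \(\tau(X)=\langle t\rangle\cong\ZZ/p\ZZ\) and \(X/\tau(X)\) is free abelian of countable rank, and both are residually finite; but if \(N\finormalsub X\) avoids \(t\), then \([N,X]\le N\cap\langle t\rangle=1\), so \(N\le Z(X)=\langle t,a_n^p,b_n^p\mid n\in\NN\rangle\), which has infinite index — a contradiction. Hence \(t\) dies in every finite quotient and \(X\) is not RF (consistently with \cref{thm:nilperfgroup}, since \(X/\tau(X)\) does have quasicyclic sections). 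So even once you know that \(\tau(G/N)\) and \((G/N)/\tau(G/N)\) are residually finite, nothing follows about \(G/N\); that assembly step is exactly where the work of the theorem lies. Moreover, your claim that each \(p\)-component of \(\tau(G/N)\) is an image of a \(p\)-component of \(\tau(G)\) is wrong — new torsion is created in quotients, as you yourself concede in your final paragraph without resolving it — so even the residual finiteness of \(\tau(G/N)\) is not actually established by your argument.

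The necessity direction also contains unproved steps. A quasicyclic section of \(G/\tau(G)\) and a \(p\)-component of \(\tau(G)\) live in subgroups, and SRF is not subgroup-closed, so you need either Smirnov's theorem that SRF and ERF coincide for nilpotent groups (cited in the paper) or a genuine construction of a bad quotient of \(G\) itself; your isolator sketch for the first condition is too vague to check, and for the second condition you assert without justification that a \(p\)-component of finite exponent which is not abelian-by-finite yields ``an infinite-rank elementary abelian section twisted by an unbounded nilpotent action'' and hence a non-RF quotient. Such sections are typically residually finite; the entire difficulty of that implication is to manufacture a quotient that is not residually finite, and your sketch never produces one. A correct short route for necessity is: SRF \(=\) ERF for nilpotent groups, ERF passes to sections, \(\ZZ(p^\infty)\) is not RF (excluding quasicyclic sections of \(G/\tau(G)\)), an unbounded abelian \(p\)-group surjects onto \(\ZZ(p^\infty)\) (excluding infinite exponent), plus the genuinely nontrivial fact that a nilpotent \(p\)-group of finite exponent which is ERF must be abelian-by-finite — none of which appears in your proposal.
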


For the reader's convenience, we provide an example of an infinitely generated nilpotent SRF-group, which appeared in both \cite[Ex.~4.1]{rrv11-a} and \cite[Lem.~2.8]{wehr11-a}.

\begin{example}
	\label{ex:nilpexample}
	The group \(G \coloneq (\bigoplus_p\, C_{p^2}) \rtimes \ZZ\), where \(\ZZ = \langle x \rangle\) acts on the cyclic group \(C_{p^2} = \langle y_p \rangle\) via \(y_p^x = y_p^{1+p}\) for each prime \(p\), is an infinitely generated \(2\)-step nilpotent SRF-group. In particular, it is not polycyclic-by-finite.
\end{example}

Smirnov proved in \cite{smir63-a} that for nilpotent groups, the properties SRF and ERF are equivalent; Menth generalised this in \cite[Thm.~7.3]{ment02-a} to nilpotent-by-finite groups. We aim to extend these results to complete twisted conjugacy separability, for which we will need some intermediate results.
 
The \zcref[nocap,noref]{lem:propP} below is inspired by \cite[Lem.~2.7]{fk22-a} and \cite[Cor.~4.3]{rrv09-a}.

\begin{lemma}
	\label{lem:propP}
	Let \(\mathcal{P}\) be a property that is retained when taking subgroups and finite extensions. Then a finite-by-\(\mathcal{P}\) RF-group also has \(\mathcal{P}\).
\end{lemma}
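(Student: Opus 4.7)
The plan is to exploit residual finiteness to find a finite index subgroup of $G$ that intersects the finite normal subgroup $F$ trivially, and then to leverage the subgroup- and finite-extension-closure of $\mathcal{P}$.

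More concretely, let $F \finormalsub G$ be a finite normal subgroup such that $G/F$ has $\mathcal{P}$. The first step is to use residual finiteness of $G$ to separate every nontrivial element of $F$ from the identity: for each $f \in F \setminus \{1\}$ pick some $N_f \finormalsub G$ with $f \notin N_f$, and set
\begin{equation*}
    N := \bigcap_{f \in F \setminus \{1\}} N_f.
\end{equation*}
Since $F$ is finite, this is a finite intersection, so $N \finormalsub G$, and by construction $N \cap F = \{1\}$.

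The second step is to observe that the natural map $N \to G/F$ has trivial kernel, so $N$ is isomorphic to its image $NF/F \leq G/F$. Since $G/F$ has $\mathcal{P}$ and $\mathcal{P}$ passes to subgroups, $N$ has $\mathcal{P}$. Finally, $N$ has finite index in $G$, so $G$ is a finite extension of a group with $\mathcal{P}$, and closure of $\mathcal{P}$ under finite extensions gives that $G$ has $\mathcal{P}$.

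There is essentially no obstacle: the only subtle point is making sure the intersection defining $N$ is finite, which is why the hypothesis that $F$ is finite (and not merely that $G$ is finite-by-$\mathcal{P}$ in some looser sense) is used. The rest of the argument is a standard application of the two closure assumptions on $\mathcal{P}$.
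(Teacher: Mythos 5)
Your proposal is correct and follows essentially the same route as the paper: the paper also picks a finite index normal subgroup $M$ with $M \cap F = 1$ (your explicit intersection of the $N_f$ justifies this step, which the paper leaves implicit from residual finiteness) and then embeds it into $G/F$ via the second isomorphism theorem before invoking the two closure properties of $\mathcal{P}$.
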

\begin{proof}
	Let \(G\) be residually finite and let \(N \normalsub G\) be a finite subgroup such that \(G/N\) has \(\mathcal{P}\). Take \(M \finormalsub G\) such that \(M \cap N = 1\). By the second isomorphism theorem,
	\begin{equation*}
		M \cong \frac{M}{M \cap N} \cong \frac{MN}{N} \leq \frac{G}{N},
	\end{equation*}
	hence \(M\) has \(\mathcal{P}\). Then \(G\) has \(\mathcal{P}\) as well.
\end{proof}

\begin{proposition}
    \label{prop:SRFisCTCSforquotientbycentral}
	Let \(G\) be an SRF-group. If \(C\) is a central subgroup of \(G\) such that \(G/C\) is CTCS, then \(G\) is CTCS.
\end{proposition}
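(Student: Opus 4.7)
The plan is to apply \Cref{prop:Htwistconj}: it suffices to show $T := [1]_{\varphi,\psi}$ is separable in $G$ for arbitrary $H$ and $\varphi,\psi \in \Hom(H,G)$. A centrality computation shows that $K := \{h \in H : \psi(h)\varphi(h)^{-1} \in C\}$ is a subgroup of $H$, that $\chi \colon K \to C$ with $\chi(k) := \psi(k)\varphi(k)^{-1}$ is a homomorphism onto $T \cap C$, and that $T \cdot (T \cap C) = T$. In particular $T \cap C$ is central (hence normal) in $G$, and $T$ is the full preimage of its image under $G \twoheadrightarrow G/(T \cap C)$. As SRF is quotient-closed and the quotient of $G/(T \cap C)$ by the image of $C$ is again $G/C$, I may reduce to the case $T \cap C = 1$, making the projection $T \to \bar T := [\bar 1]_{\bar\varphi,\bar\psi} \subseteq \bar G := G/C$ bijective.

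Given $g \notin T$, I split into two cases. If $\bar g \notin \bar T$, then CTCS of $\bar G$ combined with \Cref{thm:maintheoremNRFisCTCS} yields $\bar T$ closed in $\bar G$, so $TC = p^{-1}(\bar T)$ is closed in $G$ and a separating $N$ is immediate. Otherwise write $g = t_0 c_0$ uniquely, with $t_0 = \psi(h_0)\varphi(h_0)^{-1}$ and $c_0 \in C \setminus \{1\}$; the substitution $h = h_0 k$ together with centrality of $c_0$ shows that for any $N \finormalsub G$, the condition $g \in TN$ forces $\tilde c_0 \in \tilde T \cap \tilde C$ in $\tilde G := G/N$. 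So it suffices to produce an $N \finormalsub G$ satisfying \emph{(a)} $c_0 \notin N$ and \emph{(b)} $T \cap CN \subseteq N$, that is, $\tilde T \cap \tilde C = \{\tilde 1\}$.

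The main obstacle is the construction of such an $N$. For any $N \finormalsub G$, the map $\chi^*_N \colon H^*_N := \{h : \psi(h)\varphi(h)^{-1} \in CN\} \to C/(C \cap N)$, $h \mapsto \psi(h)\varphi(h)^{-1} \bmod N$, is a well-defined homomorphism (using that $\tilde C$ is central in $\tilde G$), and its image lifts to a finite-index subgroup $M_N \subseteq C$ containing $C \cap N$ and normal in $G$ as a subgroup of the center; condition \emph{(b)} is precisely $M_N \subseteq N$. Starting from some $N_0 \finormalsub G$ given by RF of $G$ with $c_0 \notin N_0$: if $M_{N_0} \subseteq N_0$ we are done; otherwise SRF of $G$ gives the separability of the normal subgroup $M_{N_0}$, so provided $c_0 \notin M_{N_0}$ one obtains a finite-index normal $N_1 \supseteq M_{N_0}$ with $c_0 \notin N_1$ (if $c_0 \in M_{N_0}$, RF is first used to exclude the finitely many offending lifts of $\chi^*_{N_0}(H^*_{N_0})$ from the approach to $c_0$). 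A careful iteration, balancing the shrinking of $N$ (which can enlarge $H^*_N$ and thus potentially reintroduce nontrivial values of $\chi^*_N$) against the growth of $C \cap N$ (which trivialises portions of $\chi^*_N$), yields the desired $N$. The hard part will be ensuring that this inductive refinement terminates, which is exactly where the SRF hypothesis is used essentially.
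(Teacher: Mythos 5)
Your reduction steps are fine: the passage to $T \cap C = 1$ (using that $D := T \cap C$ is a central subgroup with $T\cdot D = T$ and that SRF is quotient-closed), the easy case $\bar g \notin \bar T$, and the observation that $g \in TN \iff c_0 \in TN$ all check out. The problem is the core of case 2: you never actually produce the subgroup $N \finormalsub G$ with $c_0 \notin N$ and $T \cap CN \subseteq N$. The sketched iteration ($N_0, M_{N_0}, N_1, \dots$) comes with no termination argument — you say yourself that ensuring termination is ``the hard part'' — and there is no monotonicity to exploit: replacing $N_i$ by $M_{N_i}N'$ need not contain $N_i$, so $H^*_N$ and $C \cap N$ can both change unpredictably, and no invariant is exhibited that decreases. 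Worse, your condition \emph{(b)} is a \emph{uniform} requirement (the image of $T$ must meet the image of all of $C$ trivially in a single finite quotient), which is substantially stronger than what separating the single element $c_0$ from $T$ requires; nothing in the hypotheses obviously guarantees that such an $N$ exists at all, so the gap is not merely a missing computation but a missing idea.

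The paper's proof avoids this entirely by not insisting on a finite quotient at this stage. Having reduced to separating $c \in C$ from $T$, it applies SRF to the central (hence normal) subgroup $D = T\cap C$ to get $M \finormalsub G$ with $c \notin DM$, and then quotients only by $K := M \cap C$. In $\bar G := G/K$ the separation $[\bar c]_{\bvarphi,\bpsi} \neq [\bar 1]_{\bvarphi,\bpsi}$ is immediate (any witness would give $\psi(h)\varphi(h)^{-1} = ck^{-1} \in C$, hence in $D$, forcing $c \in DK$), and although $\bar G$ is generally infinite, it is finite-by-CTCS (since $C/K$ is finite and central, with quotient $G/C$) and residually finite (since $G$ is SRF); \cref{lem:propP}, together with the fact that CTCS is subgroup- and finite-extension-closed via \cref{thm:maintheoremNRFisCTCS}, then shows $\bar G$ is CTCS, and one finishes by taking a finite quotient of $\bar G$. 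In your reduced setting ($T\cap C = 1$) this runs even more smoothly: pick $M$ with $c_0 \notin M$, set $K = M\cap C$, note $\bar c_0 \notin \bar T$ in $G/K$, and invoke \cref{lem:propP}. If you replace your iterative construction by this two-step argument (separate in an intermediate quotient with finite central part, then use the closure properties of CTCS), your proof goes through.
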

\begin{proof}
	Let \(g \in G\) and \(\varphi,\psi \in \Hom(H,G)\) with \([g]_{\varphi,\psi} \neq [1]_{\varphi,\psi}\). Set \(\tilde{G} \coloneq G/C\). We consider two cases. 
	
	First, if \([\tilde{g}]_{\tvarphi,\tpsi} \neq [\tilde{1}]_{\tvarphi,\tpsi}\), then we find \(\tilde{N} \finormalsub \tilde{G}\) such that \([\bar{g}]_{\bvarphi,\bpsi} \neq [\bar{1}]_{\bvarphi,\bpsi}\) in \(\bar{G} \coloneq \tilde{G}/\tilde{N}\). Let \(N\) be the preimage of \(\tilde{N}\) in \(G\), then \(N \finormalsub G\) and hence \(\bar{G} \cong G/N\) is a finite quotient of \(G\).
	
	Second, if \([\tilde{g}]_{\tvarphi,\tpsi} = [\tilde{1}]_{\tvarphi,\tpsi}\), we have that \([g]_{\varphi,\psi} = [c]_{\varphi,\psi}\) for some \(c \in C\). Set \(D \coloneq C \cap [1]_{\varphi,\psi}\), which is a subgroup because \(C\) is central. As \(G\) is SRF and \(D\) is normal in \(G\), pick some \(M \finormalsub G\) such that \(c \notin DM\). Set \(K \coloneq M \cap C\), then \(K\) is normal in \(G\), \(K \finormalsub C\) and \(c \notin DK\). Set \(\bar{G} \coloneq G/K\) and suppose, by contradiction, that \([\bar{c}]_{\bvarphi,\bpsi} = [\bar{1}]_{\bvarphi,\bpsi}\). Then there exist \(h \in G\) and \(k \in K\) such that
	\begin{equation*}
		c = \psi(h)\varphi(h)^{-1}k.
	\end{equation*}
	But then \(\psi(h)\varphi(h)^{-1} = ck^{-1} \in C\), and hence \(\psi(h)\varphi(h)^{-1} \in D\). In turn, this means that \(c \in DK\), which cannot be the case. Therefore \([\bar{c}]_{\bvarphi,\bpsi} \neq [\bar{1}]_{\bvarphi,\bpsi}\). Now \(\bar{G}\) is finite-by-CTCS and residually finite. Using \zcref{lem:propP}, it is CTCS. We can now proceed as in the first case.
\end{proof}
In particular, if \(G\) is abelian we may take \(C = G\) in the above \zcref[nocap,noref]{prop:SRFisCTCSforquotientbycentral}, hence every abelian SRF-group is also a CTCS-group.

We now have all necessary tools to extend Menth's result to complete twisted conjugacy separability.

\begin{theorem}
	\label{thm:mainthmnilp}
	For a nilpotent-by-finite group \(G\), the following are equivalent:
	\begin{enumerate}[label=(\alph*)]
		\item \(G\) is strongly residually finite (SRF);
		\item \(G\) is extended residually finite (ERF);
		\item \(G\) is residually finite with respect to nests (NRF);
		\item \(G\) is completely twisted conjugacy separable (CTCS).
	\end{enumerate}
\end{theorem}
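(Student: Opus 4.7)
The equivalence (c)$\Leftrightarrow$(d) is exactly \Cref{thm:maintheoremNRFisCTCS}; the implication (c)$\Rightarrow$(b) follows because subgroups are nests (\Cref{ex:nests}(1)); and (b)$\Rightarrow$(a) is immediate, as normal subgroups are in particular subgroups. All of the content therefore lies in the implication (a)$\Rightarrow$(d), which the plan is to prove in two stages: first for nilpotent $G$, and then for nilpotent-by-finite $G$ by reducing to the nilpotent case.

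For nilpotent $G$, I would argue by induction on the nilpotency class $c$. The base case $c = 0$ is vacuous. For the inductive step with $c \geq 1$, set $C := Z(G)$, a non-trivial central subgroup of $G$. Since SRF is quotient-closed, $G/C$ is SRF; as it is moreover nilpotent of class $c-1$, the induction hypothesis yields that $G/C$ is CTCS. Applying \Cref{prop:SRFisCTCSforquotientbycentral} to the pair $(G, C)$ then gives that $G$ is CTCS.

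For the nilpotent-by-finite case, let $N \finormalsub G$ be a finite-index normal nilpotent subgroup of $G$. By Menth's extension of Smirnov's theorem, SRF and ERF coincide in the class of nilpotent-by-finite groups, so $G$ is ERF. Since ERF is subgroup-closed, $N$ is also ERF; being nilpotent, $N$ is then SRF by Smirnov. The preceding stage applies and yields that $N$ is CTCS. Finally, since CTCS agrees with NRF by \Cref{thm:maintheoremNRFisCTCS} and NRF is closed under finite extensions by Stebe, it follows that $G$ is CTCS.

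The main obstacle is the transition from the SRF hypothesis on $G$ down to SRF of $N$ in the nilpotent-by-finite case, since SRF is not in general subgroup-closed and one cannot immediately pass from $G$ to $N$. The resolution is to detour through ERF, exploiting the non-trivial coincidences of SRF and ERF for nilpotent groups (Smirnov) and for nilpotent-by-finite groups (Menth), both of which have already been recorded in the paper.
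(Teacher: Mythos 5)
Your proposal is correct and follows essentially the same route as the paper: both reduce everything to showing that an SRF/ERF nilpotent-by-finite group is CTCS, using Menth's SRF--ERF equivalence to pass to a nilpotent finite-index subgroup, induction on the nilpotency class via \cref{prop:SRFisCTCSforquotientbycentral}, and closure of NRF/CTCS under finite extensions. (Only cosmetic differences: the paper arranges the cycle as (b)\(\Rightarrow\)(d) rather than (a)\(\Rightarrow\)(d), and your appeal to Smirnov for ``ERF implies SRF'' is unnecessary, since that implication is trivial.)
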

\begin{proof}
	The equivalence between (a) and (b) is the aforementioned result by Menth, and the equivalence between (c) and (d) is just \zcref{thm:mainthmA}.
	
    Since any subgroup is a nest, (c) implies (b), and it now suffices to prove that (b) implies (d). Let \(G\) be a nilpotent-by-finite ERF-group and let \(N \finormalsub G\) be nilpotent. Because subgroups of ERF-groups are ERF, \(N\) is ERF and hence SRF. By induction on the nilpotency class and using \zcref{prop:SRFisCTCSforquotientbycentral}, it follows that \(N\) is CTCS. Since \(G\) is a finite extension of \(N\), \(G\) itself is CTCS. So indeed (b) implies (d) and hence all of (a), (b), (c) and (d) are equivalent.
\end{proof}
	
\section{Polycyclic-by-nilpotent-by-finite groups}

At this point, we know of two families of groups that are completely twisted conjugacy separable: the polycyclic-by-finite groups and the nilpotent-by-finite SRF-groups, which are described in \zcref{thm:nilperfgroup}. Using a result by Kilsch, we can glue these results together and extend  \zcref{thm:mainthmnilp} to the family of polycyclic-by-nilpotent-by-finite groups.

A priori, the term ``polycyclic-by-nilpotent-by-finite'' could be ambiguous, as it could refer to either polycyclic-by-(nilpotent-by-finite) groups or (polycyclic-by-nilpotent)-by-finite groups. The following \zcref[nocap,noref]{prop:polybynilpbyfinite} shows that there is no such ambiguity. We use \(\gamma_i(G)\) to denote the \(i\)-th term of the lower central series of a group \(G\), i.e.\@ \(\gamma_1(G) \coloneq G\) and \(\gamma_{i+1}(G) \coloneq [\gamma_i(G),G]\).

\begin{proposition}
	\label{prop:polybynilpbyfinite}
	Let \(G\) be a group. Then \(G\) is polycyclic-by-(nilpotent-by-finite) if and only if it is (polycyclic-by-nilpotent)-by-finite.
\end{proposition}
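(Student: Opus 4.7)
The plan is to prove both implications directly, with the reverse implication requiring a standard normal closure trick.

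For the forward direction, assume $G$ is polycyclic-by-(nilpotent-by-finite). Pick $P \normalsub G$ polycyclic such that $G/P$ is nilpotent-by-finite, and then pick a nilpotent $N/P \normalsub G/P$ with $(G/P)/(N/P) \cong G/N$ finite. The preimage $N \normalsub G$ is then polycyclic-by-nilpotent (via the subnormal chain $1 \leq P \leq N$), and $G/N$ is finite, so $G$ is (polycyclic-by-nilpotent)-by-finite. This direction is essentially immediate from the third isomorphism theorem.

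For the reverse direction, assume $G$ is (polycyclic-by-nilpotent)-by-finite, so there is some $M \finormalsub G$ that is polycyclic-by-nilpotent, say with $P \normalsub M$ polycyclic and $M/P$ nilpotent. The obstacle is that $P$ need not be normal in $G$. To fix this, I would take the normal closure $P' := \langle gPg^{-1} \mid g \in G\rangle$. Since $M$ already normalises $P$, the normaliser $N_G(P)$ contains $M$, so the number of $G$-conjugates of $P$ is at most $[G:M] < \infty$. Hence $P'$ is a finite product $P_1 \cdots P_k$ of conjugates of $P$, each of which is polycyclic (being isomorphic to $P$) and normal in $M$ (since $M \normalsub G$ and $P \normalsub M$).

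The key step is then to show that $P'$ is polycyclic. This follows by induction on $k$ using the standard fact that if $A, B \normalsub M$ are polycyclic, then $AB/A \cong B/(A \cap B)$ is polycyclic, so $AB$ is polycyclic-by-polycyclic and hence polycyclic. By construction $P'$ is normalised by $G$, so $P' \normalsub G$. Finally, since $P \leq P' \leq M$, the quotient $M/P'$ is a quotient of the nilpotent group $M/P$ and is therefore nilpotent, while $(G/P')/(M/P') \cong G/M$ is finite. Thus $G/P'$ is nilpotent-by-finite and $P'$ is a normal polycyclic subgroup of $G$, showing that $G$ is polycyclic-by-(nilpotent-by-finite). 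The main subtlety is the polycyclicity of the normal closure $P'$, but this is handled cleanly by the finite-product argument above.
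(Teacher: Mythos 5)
Your proof is correct, and the forward direction coincides with the paper's; for the nontrivial (reverse) direction you take a genuinely different route. You \emph{enlarge} $P$ to its normal closure $P'$ in $G$: since $N_G(P)$ contains the finite index normal subgroup $M$, the closure is a finite product of $G$-conjugates of $P$, each polycyclic and normal in $M$, and an induction using ``polycyclic-by-polycyclic is polycyclic'' shows $P'$ is polycyclic, while $M/P'$ stays nilpotent as a quotient of $M/P$. The paper instead \emph{shrinks} to the term $\gamma_{c+1}(M)$ of the lower central series, where $c$ is the nilpotency class of $M/P$: this subgroup is contained in $P$ (hence polycyclic, as subgroups of polycyclic groups are polycyclic) and is characteristic in $M$, hence normal in $G$, and $M/\gamma_{c+1}(M)$ is nilpotent by construction. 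The paper's argument is slightly more economical—it needs neither the finiteness of the number of conjugates nor the lemma on products of normal polycyclic subgroups—whereas your normal-closure argument is a robust general-purpose technique that would apply verbatim with ``polycyclic'' replaced by any class closed under subgroups, quotients and extensions, and would even survive weakening $M \finormalsub G$ to $M \fisub G$ after passing to the normal core. Both proofs are complete and correct.
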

\begin{proof}
	First assume that \(G\) is polycyclic-by-(nilpotent-by-finite). Let \(P \normalsub G\) be polycyclic normal subgroup such that \(G/P\) is nilpotent-by-finite. Then there exists a nilpotent finite index normal subgroup of \(G/P\), which must be of the form \(N/P\). This \(N\) is a finite index normal subgroup of \(G\) that is polycyclic-by-nilpotent, therefore \(G\) is (polycyclic-by-nilpotent)-by-finite.
	
	For the converse, let \(N \finormalsub G\) be polycyclic-by-nilpotent and let \(P \normalsub N\) be polycyclic such that \(N/P\) is nilpotent. If \(c\) is the nilpotency class of \(N/P\), then \(\gamma_{c+1}(N/P) = 1\). But
	\begin{equation*}
		\gamma_{c+1}(N/P) = \frac{\gamma_{c+1}(N)P}{P},
	\end{equation*}
	thus \(\gamma_{c+1}(N) \leq P\), hence \(\gamma_{c+1}(N)\) is then polycyclic. Moreover, since it is characteristic in \(N\), it is normal in \(G\). Therefore \(G/\gamma_{c+1}(N)\) contains \(N/\gamma_{c+1}(N)\) as a nilpotent finite index normal subgroup, and \(G\) is polycyclic-by-(nilpotent-by-finite).
\end{proof}

We recall a result by Kilsch on derivations. The definition used by Kilsch is that a derivation \(d\colon H \to G\) between groups \(H\) and \(G\) is a map such that \[d(h_1h_2) = d(h_1)^{h_2}d(h_2),\]
where \(H\) acts on \(G\) (from the right) via automorphisms.

\begin{theorem}[see {\cite[Thm.~D]{kils83-a}}]
	\label{thm:derivationclosed}
	The image of a derivation from a soluble-by-finite group into a polycyclic-by-finite group \(G\) is closed with respect to the finite-index topology on \(G\).
\end{theorem}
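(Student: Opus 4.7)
The plan is to realise $d(H)$ as a twisted conjugacy class of the identity inside an auxiliary polycyclic-by-finite group, and then invoke Stebe's theorem that polycyclic-by-finite groups are NRF via \cref{prop:tcc1isnest}.

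First, I would reduce to the case that $H$ itself is polycyclic-by-finite. The action of $H$ on $G$ by automorphisms factors through a subgroup $\bar{H} \leq \Aut(G)$. Since $G$ is polycyclic-by-finite, a classical result of Wehrfritz embeds $\Aut(G)$ into some $\GL_n(\ZZ)$, and Mal'cev's theorem on soluble subgroups of integer linear groups then forces $\bar{H}$ to be polycyclic-by-finite. The kernel $K$ of the action acts trivially on $G$, so the restriction $d|_K$ is an ordinary homomorphism and $d(K) \leq G$ is closed in the finite-index topology of $G$ by the ERF property of polycyclic-by-finite groups. The derivation identity gives $d(hK) = d(h) \cdot d(K)$ for every $h \in H$, so $d(H)$ is a union of cosets of $d(K)$ indexed by $\bar{H}$, and one reduces — at the cost of some bookkeeping — to the polycyclic-by-finite case for $H$.

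Once $H$ is polycyclic-by-finite, form $\Gamma := G \rtimes H$ using the given action. As an extension of a polycyclic-by-finite group by a polycyclic-by-finite group, $\Gamma$ is itself polycyclic-by-finite. Define $\varphi, \psi \in \Hom(H, \Gamma)$ by $\varphi(h) := (1,h)$ and $\psi(h) := (d(h),h)$; the second map is a homomorphism precisely because $d$ satisfies the cocycle identity. A direct computation in $\Gamma$ gives $\psi(h)\varphi(h)^{-1} = (d(h),1)$, so that
\begin{equation*}
    [1]_{\varphi,\psi} = \{\psi(h)\varphi(h)^{-1} : h \in H\} = d(H) \times \{1\}.
\end{equation*}
By \cref{prop:tcc1isnest}, this twisted conjugacy class is a nest in $\Gamma$, and Stebe's NRF theorem ensures it is closed in the finite-index topology of $\Gamma$. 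Since $G$ sits in $\Gamma$ as a closed subgroup whose subspace topology coincides with its own finite-index topology (again using ERF), $d(H)$ is closed in $G$.

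The main obstacle I foresee is the first reduction: the derivation $d$ does not descend to a derivation $\bar{H} \to G/d(K)$ without additional assumptions, because $d(K)$ need be neither normal in $G$ nor $H$-invariant. One must either enlarge $d(K)$ to an $H$-invariant normal subgroup of $G$ that is still separable (exploiting ERF of $G$ to preserve separability), or handle each coset $d(h) d(K)$ individually by applying the semidirect-product argument to an appropriately induced derivation on $\bar{H}$. Once this technicality is managed, the remainder of the proof is essentially formal.
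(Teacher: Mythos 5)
First, a point of comparison: the paper does not prove this statement at all — it is quoted from Kilsch's Theorem~D and used as a black box in the proof of \cref{thm:solubleCTCS} — so your proposal is really being measured against Kilsch's theorem itself. Your second step is correct and is a nice observation for the special case where the acting group \(H\) is polycyclic-by-finite: then \(\Gamma := G \rtimes H\) is polycyclic-by-finite, the cocycle identity is exactly what makes \(h \mapsto (d(h),h)\) a homomorphism alongside \(h \mapsto (1,h)\), and the relevant twisted conjugacy class of the identity is \(d(H)\) (up to a convention slip: with your choices one actually gets \(\psi(h)\varphi(h)^{-1} = (d(h)^{h^{-1}},1)\), i.e.\ the set \(d(H)^{-1}\times\{1\}\), which is harmless since inversion is a homeomorphism of the finite-index topology, but the displayed identity as written is not quite right). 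Then \cref{prop:tcc1isnest} plus Stebe's theorem that polycyclic-by-finite groups are NRF gives closedness in \(\Gamma\), and since the subspace topology induced on \(G\) by the finite-index topology of \(\Gamma\) is coarser than the finite-index topology of \(G\), closedness in \(\Gamma\) already yields closedness in \(G\); the ERF appeal there is unnecessary.

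The genuine gap is the first reduction, which you flag but do not close, and it is the actual content of Kilsch's theorem. When \(H\) is merely soluble-by-finite, the quotient \(\bar{H} = H/K\) by the kernel of the action is indeed polycyclic-by-finite (embedding \(\Aut(G)\) into \(\GL_n(\ZZ)\) and applying Mal'cev is fine), but \(d\) factors through \(\bar{H}\) only when \(d(K)=1\). In general \(d(H) = \bigcup_{\bar{h} \in \bar{H}} d(h)\,d(K)\) is an infinite union of cosets of the subgroup \(d(K)\), and an infinite union of closed cosets need not be closed: separability of \(d(K)\) and a coset-by-coset argument prove nothing about the union. Your two suggested repairs do not obviously work either: enlarging \(d(K)\) to an \(H\)-invariant normal subgroup changes the set whose closedness is at stake (note \(d(K)\) is neither normal in \(G\) nor \(H\)-invariant; one only has \(d(K)^h = d(h)d(K)d(h)^{-1}\)), and closedness of the image of \(d(H)\) in a quotient of \(G\) does not pull back, since \(d(H)\) is not a full preimage. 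This lost generality is precisely what the paper needs: in the proof of \cref{thm:solubleCTCS} the derivation is defined on \(\Coin(\bvarphi,\bpsi) \leq G \times G\), which is soluble-by-finite but in general not polycyclic-by-finite (nor even finitely generated). As it stands, your argument establishes a strictly weaker statement than the theorem being proved.
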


\begin{definition}
	Let \(G\) and \(H\) be groups and let \(\varphi,\psi \in \Hom(H,G)\). The \emph{coincidence subgroup} \(\Coin(\varphi,\psi)\) of \(\varphi\) and \(\psi\) is defined as
	\begin{equation*}
		\Coin(\varphi,\psi) \coloneq \{ h \in H \mid \varphi(h) = \psi(h) \}.
	\end{equation*}
\end{definition}

We now generalise \zcref{thm:mainthmnilp} to the family of polycyclic-by-nilpotent-by-finite groups, obtaining our second main result.

\begin{theorem}[manual-num=B]
	\label{thm:mainthmB}
		For a polycyclic-by-nilpotent-by-finite group \(G\), the following are equivalent:
	\begin{enumerate}[label=(\alph*)]
		\item \(G\) is strongly residually finite (SRF);
		\item \(G\) is extended residually finite (ERF);
		\item \(G\) is residually finite with respect to nests (NRF);
		\item \(G\) is completely twisted conjugacy separable (CTCS).
	\end{enumerate}
\end{theorem}
\begin{proof}
	It suffices to prove that \(G\) being SRF implies it is CTCS. Let \(g \in G\), let \(H\) be a group and let \(\varphi,\psi \in \Hom(H,G)\) be such that \([g]_{\varphi,\psi} \neq [1]_{\varphi,\psi}\). We may assume that \(H \leq G \times G\) and therefore that \(H\) is soluble-by-finite. Let \(N \normalsub G\) be a polycyclic normal subgroup such that \(\bar{G} \coloneq G/N\) is nilpotent-by-finite, and let \(\bvarphi,\bpsi \in \Hom(H,\bar{G})\). If \([\bar{g}]_{\bvarphi,\bpsi} \neq [\bar{1}]_{\bvarphi,\bpsi}\), then we need only apply \zcref{thm:mainthmnilp}.
	
	If however \([\bar{g}]_{\bvarphi,\bpsi} = [\bar{1}]_{\bvarphi,\bpsi}\), there exists some \(n \in N\) such that
	\([g]_{\varphi,\psi} = [n]_{\varphi,\psi}\). Now consider the map 
    \begin{equation*}
        d\colon \Coin(\bvarphi,\bpsi) \to N\colon h \mapsto \psi(h)^{-1}\varphi(h).
    \end{equation*}
    For \(h_1,h_2 \in \Coin(\bvarphi,\bpsi)\), one has that
    \begin{align*}
        d(h_1h_2) &= \psi(h_1h_2)^{-1}\varphi(h_1h_2)\\
                &= \psi(h_2)^{-1}\psi(h_1)^{-1}\varphi(h_1)\varphi(h_2)\\
                &= \psi(h_2)^{-1}d(h_1)\psi(h_2)\psi(h_2)^{-1}\varphi(h_2)\\
                &= d(h_1)^{h_2}d(h_2),
    \end{align*}
    where \(\Coin(\bvarphi,\bpsi)\) acts on \(N\) (from the right) via
    \begin{equation*}
    	\Coin(\bvarphi,\bpsi) \to \Aut(N)\colon h \mapsto \iota_{\psi(h)^{-1}}.
    \end{equation*}
    Thus, it follows that \(d\) is a derivation, and its image is exactly \(N \cap [1]_{\varphi,\psi}\). Since \(N\) is polycyclic and \(\Coin(\bvarphi,\bpsi) \leq H\) is soluble-by-finite, we can apply \zcref{thm:derivationclosed}: choose \(K \finormalsub N\) such that \(n \notin (N \cap [1]_{\varphi,\psi})K\). Since \(N\) is finitely generated, we may assume \(K\) is characteristic in \(N\) and therefore normal in \(G\). Set \(\tilde{N} \coloneq N/K\) and \(\tilde{G} \coloneq G/K\), then \([\tilde{g}]_{\tvarphi,\tpsi} = [\tilde{n}]_{\tvarphi,\tpsi} \neq [\tilde{1}]_{\tvarphi,\tpsi}\).
    
    As \(\tilde{N}\) is finite and \(\tilde{G}/\tilde{N} \cong G/N = \bar{G}\) is nilpotent-by-finite, \(\tilde{G}\) is a finite-by-(nilpotent-by-finite) SRF-group. From \zcref{lem:propP} we obtain that \(\tilde{G}\) is then nilpotent-by-finite, hence we can finish by again applying \zcref{thm:mainthmnilp}.
\end{proof}

In \cite[Thm.~4]{malc58-a}, Mal'cev proved that the direct product of two ERF-groups is itself ERF, and in \cite[Thm.~4]{ag73-a} Allenby and Gregorac proved that the semidirect product \(N \rtimes H\) of a finitely generated ERF-group \(N\) and an ERF-group \(H\) is itself ERF. Combined with \zcref{thm:mainthmB}, these results can be used to construct polycyclic-by-nilpotent-by-finite CTCS-groups that are neither polycyclic-by-finite nor nilpotent-by-finite.

\begin{corollary}
	\label{cor:pcbynilpsemiCTCS}
	Let \(G \coloneq N \rtimes H\) be the (semi)direct product of a polycyclic group \(N\) and a nilpotent-by-finite ERF-group \(H\). Then \(G\) is completely twisted conjugacy separable.
\end{corollary}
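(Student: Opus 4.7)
The plan is to reduce the corollary to \Cref{thm:solubleCTCS} by verifying its two hypotheses: that \(G\) lies in the class of polycyclic-by-nilpotent-by-finite groups, and that \(G\) is ERF.

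First, I would check the structural hypothesis. In the semidirect product \(G = N \rtimes H\), the subgroup \(N\) is a polycyclic normal subgroup, and the quotient \(G/N\) is isomorphic to \(H\), which is nilpotent-by-finite by assumption. Hence \(G\) is polycyclic-by-(nilpotent-by-finite), which by \Cref{prop:polybynilpbyfinite} coincides with being polycyclic-by-nilpotent-by-finite, so \Cref{thm:solubleCTCS} is applicable.

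Second, I would establish that \(G\) is ERF. Since \(N\) is polycyclic, Mal'cev's theorem (quoted after \Cref{def:extresfin}) yields that \(N\) is itself ERF, and \(N\) is finitely generated. In the direct product case, one directly invokes Mal'cev's result that a finite direct product of ERF-groups is ERF. In the genuinely semidirect case, one invokes the Allenby--Gregorac theorem that a semidirect product \(N \rtimes H\), with \(N\) a finitely generated ERF-group and \(H\) an ERF-group, is ERF. Either way, \(G\) is ERF.

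Having both hypotheses in hand, I would conclude by applying \Cref{thm:solubleCTCS}: for a polycyclic-by-nilpotent-by-finite group, ERF is equivalent to CTCS, so \(G\) is completely twisted conjugacy separable. There is no real obstacle here; the corollary is essentially a packaging of \Cref{thm:solubleCTCS} with the external permanence results of Mal'cev and Allenby--Gregorac, and the only point demanding care is confirming that \(N\) is finitely generated so that the Allenby--Gregorac hypothesis is satisfied, which is immediate from polycyclicity.
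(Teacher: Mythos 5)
Your proposal is correct and follows essentially the same route as the paper: the corollary is justified there by combining Mal'cev's and Allenby--Gregorac's ERF permanence results (with \(N\) polycyclic, hence finitely generated and ERF) with \cref{thm:solubleCTCS}, exactly as you do. The only addition you make, explicitly verifying that \(G\) is polycyclic-by-(nilpotent-by-finite) via \(G/N \cong H\), is a sensible spelling-out of what the paper leaves implicit.
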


We now use this \zcref[nocap,noref]{cor:pcbynilpsemiCTCS} to construct some explicit examples.

\begin{example}
	Let \(N\) be the group \(\ZZ^2 \rtimes_\theta \ZZ\) with \(\theta\colon \ZZ \to \Aut(\ZZ^2) \cong \GL_2(\ZZ)\) determined by
	\begin{equation*}
		\theta(1) = \begin{bmatrix}
			2 & 1\\
			1 & 1
		\end{bmatrix},
	\end{equation*}
	which is polycyclic but not nilpotent-by-finite (see e.g.\@ \cite[Ex.~3.4]{gw03-a}). Let \(H\) be the nilpotent group from \zcref{ex:nilpexample}, which is not polycyclic-by-finite since it is not finitely generated. Then the direct product \(N \times H\) is a polycyclic-by-nilpotent CTCS-group.
\end{example}

\begin{example}
	Let \(H\) again be the nilpotent group from \zcref{ex:nilpexample}. Consider the homomorphism \(\lambda\colon H \to \Aut(\ZZ^2) \cong \GL_2(\ZZ)\), defined by
	\begin{equation*}
		\lambda(x) = \begin{bmatrix}
			2 & 1\\
			1 & 1
		\end{bmatrix},
		\quad
		\lambda(y_2) = \begin{bmatrix*}[r]
		-1 & 0\\
		0 & -1
		\end{bmatrix*},
		\quad
		\lambda(y_p) = \begin{bmatrix}
			1 & 0\\
			0 & 1
		\end{bmatrix} \textrm{ when } p \geq 3.
	\end{equation*}
	Then \(\ZZ^2 \rtimes_\lambda H\) is a polycyclic-by-nilpotent CTCS-group. But this group contains both \(N\) (as above) and \(H\) as subgroups, so it is neither nilpotent-by-finite nor polycyclic-by-finite.
\end{example}

\section*{Acknowledgements}
The author is indebted to Peter Wong for bringing the work by Stebe on nests to his attention, to Karel Dekimpe for catching a mistake in the proof of \zcref{thm:mainthmB} and helping to correct said proof, and to the anonymous referee for a multitude of helpful comments and valuable suggestions.

\printbibliography

\end{document}